\documentclass[]{interact}

\usepackage[caption=false]{subfig}

\usepackage[natbibapa,nodoi]{apacite}
\setlength\bibhang{12pt}

\theoremstyle{plain}
\newtheorem{theorem}{Theorem}
\newtheorem{lemma}{Lemma}
\newtheorem{corollary}{Corollary}

\theoremstyle{definition}

\theoremstyle{remark}

\usepackage[utf8]{inputenc}


\usepackage[mathlines]{lineno}

\usepackage{amsmath}	


\usepackage{etoolbox}          

\newcommand*\linenomathpatch[1]{%
  \cspreto{#1}{\linenomath}%
  \cspreto{#1*}{\linenomath}%
  \csappto{end#1}{\endlinenomath}%
  \csappto{end#1*}{\endlinenomath}%
}
\newcommand*\linenomathpatchAMS[1]{%
  \cspreto{#1}{\linenomathAMS}%
  \cspreto{#1*}{\linenomathAMS}%
  \csappto{end#1}{\endlinenomath}%
  \csappto{end#1*}{\endlinenomath}%
}

\expandafter\ifx\linenomath\linenomathWithnumbers
  \let\linenomathAMS\linenomathWithnumbers
  \patchcmd\linenomathAMS{\advance\postdisplaypenalty\linenopenalty}{}{}{}
\else
  \let\linenomathAMS\linenomathNonumbers
\fi

\linenomathpatch{equation}
\linenomathpatchAMS{gather}
\linenomathpatchAMS{multline}
\linenomathpatchAMS{align}
\linenomathpatchAMS{alignat}
\linenomathpatchAMS{flalign}

\makeatletter
\patchcmd{\mmeasure@}{\measuring@true}{
  \measuring@true
  \ifnum-\linenopenaltypar>\interdisplaylinepenalty
    \advance\interdisplaylinepenalty-\linenopenalty
  \fi
  }{}{}
\makeatother


\usepackage{amssymb}	
\usepackage{amsfonts}	
\usepackage{amsthm}	
\usepackage{sansmath}

\usepackage{graphicx,xcolor}

\usepackage{enumitem}

\newcommand{\bp}{\boldsymbol{p}}
\renewcommand{\bm}{\boldsymbol{m}}
\newcommand{\ba}{\boldsymbol{a}}

\newcommand{\bC}{\boldsymbol{C}}

\newcommand{\bu}{\boldsymbol{u}}

\newcommand{\bv}{\boldsymbol{v}}
\newcommand{\bV}{\boldsymbol{V}}
\newcommand{\bk}{\boldsymbol{k}}

\newcommand{\bx}{\boldsymbol{x}}
\newcommand{\bw}{\boldsymbol{w}}

\newcommand{\sT}{\mathsf{T}}
\newcommand{\sP}{\mathsf{P}}
\newcommand{\sQ}{\mathsf{Q}}
\newcommand{\sI}{\mathsf{I}}
\newcommand{\sJ}{\mathsf{J}}
\newcommand{\sA}{\mathsf{A}}

\newcommand{\sM}{\mathsf{M}}
\newcommand{\sS}{\mathsf{S}}
\newcommand{\bR}{\boldsymbol{R}}

\newcommand{\bOmega}{\boldsymbol{\Omega}}
\newcommand{\bgamma}{\boldsymbol{\gamma}}
\newcommand{\Eta}{\boldsymbol{\eta}}
\newcommand{\bxi}{\boldsymbol{\xi}}

\newcommand{\grad}{\boldsymbol{\nabla}}
\newcommand{\eps}{\varepsilon}
\newcommand{\Chi}{\boldsymbol{\chi}}
\newcommand{\bbR}{\mathbb{R}}
\newcommand{\bbT}{\mathbb{T}}
\newcommand{\cD}{\mathcal{D}}

\newcommand*{\Fr}{{\operatorname{Fr}}}
\newcommand*{\Bu}{{\operatorname{Bu}}}

\newcommand{\Range}{\operatorname{Range}}
\newcommand{\Ker}{\operatorname{Ker}}
\newcommand{\curl}{\operatorname{curl}}
\newcommand{\Vdiv}{V_{\operatorname{div}}}
\newcommand{\Diff}{\operatorname{Diff}}

\allowdisplaybreaks

\title{Variational balance models for the three-dimensional
Euler-Boussinesq equations with full Coriolis force}

\author{
\name{G\"ozde \"Ozden\textsuperscript{a,b,c} and Marcel Oliver\textsuperscript{a}}
\affil{\textsuperscript{a}School of Engineering and Science, Jacobs
University, 28759 Bremen, Germany; \\ \textsuperscript{b}MARUM -- Center
for Marine Environmental Sciences, Universität Bremen, 28334 Bremen,
Germany;
\textsuperscript{c}Meteorological Institute, Universität Hamburg,
20146 Hamburg, Germany}
\footnote{Corresponding author: Gözde Özden, goezde.oezden@uni-hamburg.de}
}

\begin{document}

\maketitle
\begin{abstract}
We derive a semi-geostrophic variational balance model for the
three-dimensional Euler--Boussinesq equations on the non-traditional
$f$-plane under the rigid lid approximation.  The model is obtained by
a small Rossby number expansion in the Hamilton principle, with no
other approximations made.  We allow for a fully non-hydrostatic flow
and do not neglect the horizontal components of the Coriolis
parameter, i.e., we do not make the so-called ``traditional
approximation''.  The resulting balance models have the same structure
as the ``$L_1$ balance model'' for the primitive equations: a
kinematic balance relation, the prognostic equation for the
three-dimensional tracer field, and an additional prognostic equation
for a scalar field over the two-dimensional horizontal domain which is
linked to the undetermined constant of integration in the thermal wind
relation.  The balance relation is elliptic under the assumption of
stable stratification and sufficiently small fluctuations in all
prognostic fields.
\end{abstract}

\begin{keywords}

Boussinesq equation, full Coriolis force, variational asymptotics,
balance models
\end{keywords}

\section{Introduction}
\label{s.intro}

We describe the derivation of a semi-geostrophic variational balance
model for the three-dimensional Euler--Boussinesq equations on the
non-traditional $f$-plane under the rigid lid approximation.  In
non-dimensional variables, the Euler--Boussinesq system takes the form
\begin{subequations}
\label{e.non-boussinesq}
\begin{gather}
  \eps \, (\partial_t \bu + \bu \cdot \grad \bu)
  + \bOmega \times \bu = - \grad p - \rho \, \bk \,,
  \label{e.non-boussinesq.a} \\
  \grad \cdot \bu = 0 \,, \\
  \partial_t \rho + \bu \cdot \grad \rho = 0 \,. \label{e.non-boussinesq.c}
\end{gather}
\end{subequations}
where $\eps=U/(fL)$ is the Rossby number (here, $U$ denotes a typical
horizontal velocity scale, $L$ a typical horizontal length scale, and
$f$ the Coriolis frequency in physical units), assumed small, $\bu$ is
the three-dimensional velocity field,
\begin{gather}
  \bOmega
  = \begin{pmatrix}
      0 \\ \cos{\phi} \\ \sin{\phi}
    \end{pmatrix}
  \equiv
  \begin{pmatrix}
    0 \\ c \\ s
  \end{pmatrix}
\end{gather}
the full Coriolis vector at constant latitude $\phi$ which, without
loss of generality, is assumed to lie in the $y$-$z$ plane as shown in
Figure~\ref{f.fig}, $p$ is the pressure, $\rho$ the density, and $\bk$
the unit vector in the vertical.  See, e.g., \cite{Maj03} or
\cite{vallis2017atmospheric} for details on the Euler--Boussinesq
equations and \cite{FranzkeOR:2019:MultiSM} for an explicit exposition
of the semi-geostrophic scaling limit.

\begin{figure}[tb]
\centering
\includegraphics[width=0.8\textwidth]{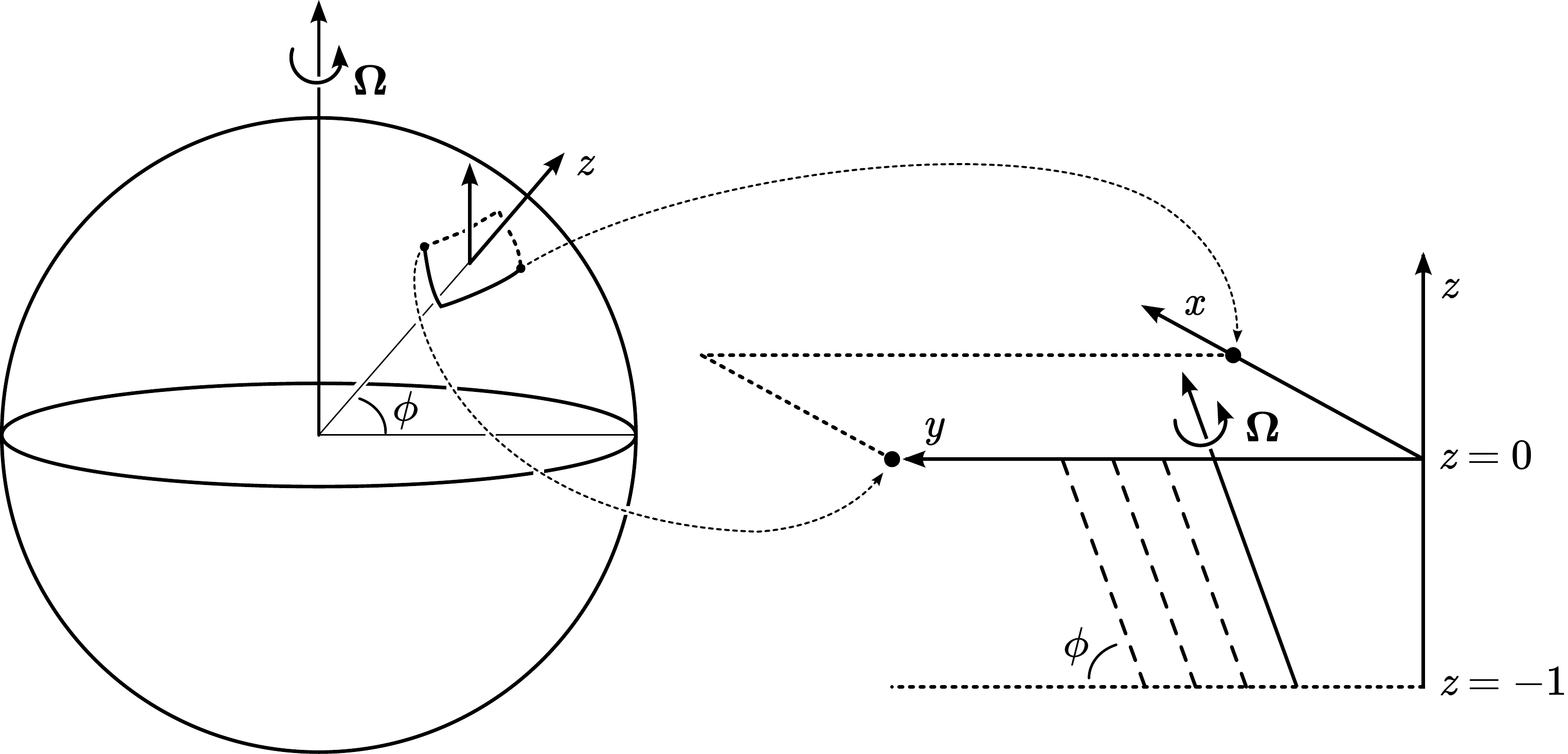}
\caption{Geometry of the $f$-plane approximation at latitude $\phi$
with the full Coriolis vector. Here, $x$, $y$, and $z$ are directed
towards the east, the north, and upward, respectively.  The dashed
lines parallel to the axis of rotation in the $y$-$z$ plane represent
the characteristics of the thermal wind relation.}
\label{f.fig}
\end{figure}
For simplicity, we assume periodic boundary conditions in
the horizontal and rigid lid boundary conditions in the vertical on a
layer of fluid with constant depth,
\begin{equation}
  \cD = \bbT^2 \times [-1,0] \,,
\end{equation}
so that
\begin{equation}
  \bk \cdot \bu = 0 \quad \text{at} \quad z=-1, 0 \,.
\end{equation}

The semi-geostrophic scaling used in \eqref{e.non-boussinesq}
corresponds to a regime of strong rotation and weaker stratification,
expressed by a Burger number $\Bu=\eps$ (or, equivalently,
$\eps = \Fr^2$, where $\Fr$ denotes the Froude number).  In the more
widely studied quasi-geostrophic regime, rotation and stratification
are equally important so that the Burger number is $O(1)$.  The
quasi-geostrophic approximation must be made in conjunction with the
assumption that the density is a small perturbation of a constant,
stably stratified background density.  The quasi-geostrophic limit as
it is found in textbooks
\cite[e.g.][]{Pedlosky:1987:GeophysicalFD,vallis2017atmospheric} is commonly studied
under the additional assumption of the hydrostatic and traditional
approximations and is widely used for proof-of-concept studies.
However, \cite{Embid-1998} have shown that it is possible to derive
quasi-geostrophic limit equations without hydrostaticity and with the
full Coriolis vector; this limit has a rigorous justification within
the framework of averaging over fast scales developed in
\cite{EmbidM:1996:AveragingFG}.  \cite{JulKnoMil06} systematically
explore generalized quasi-geostrophic models by allowing, in contrast
to the earlier work of \cite{Embid-1998}, different domain aspect
ratios and different tilting regimes of the axis of rotation; see
\cite{LucasMR:2017:NontraditionalQG} for recent rigorous analysis and
\cite{NievesGJ:2016:InvestigationsNH} for a numerical study.  We also
refer the reader to \cite{BabinMN:2002:FastSL} for a detailed
discussion of the different scaling regimes for rotating stratified
flow and for a discussion of resonances when averaging over fast
waves.  A more expository survey can be found in
\cite{FranzkeOR:2019:MultiSM}.

In our setting, there is no need to split off a small perturbation
density from a stably stratified background profile \emph{a priori}.
However, we shall see that this condition does not disappear
altogether but comes back in slightly weaker form as a solvability
condition for the balance relation.  By keeping the density as a
Lagrangian tracer, it is possible to retain the variational
formulation of the equation of motion throughout the limit.  

As in \cite{Embid-1998}, \cite{JulKnoMil06} and
\cite{LucasMR:2017:NontraditionalQG}, we use the full Coriolis vector,
in contrast to the more common ``traditional approximation'' where
only the vertical component of the Coriolis vector is retained---a
consistency requirement if the hydrostatic approximation is made
\cite[cf.][]{White:2002:ViewEM}.  Vice versa, when the aspect ratio,
the ratio of typical horizontal and typical vertical scales is of
order one so that the model is fully non-hydrostatic, as is assumed
here, both horizontal and vertical components of the Coriolis vector
contribute to the leading order on the mid-latitude $f$-plane.
\cite{WhiWin14} perform a more idealized numerical study for the
non-hydrostatic Boussinesq equations with only traditional Coriolis
forces (a ``polar $f$-plane'') in three different limits: the
quasi-geostrophic regime, the strongly stratified regime where the
Rossby number remains $O(1)$ while the Froude number goes to zero
\citep[both limits as considered by][]{Embid-1998} as well as the case
of strong rotation and weak stratification where the Froude number
remains $O(1)$ while the Rossby number goes to zero.
\cite{WhiteheadHW:2018:EffectTD} also consider intermediate regimes
between quasi-geostrophy and the weak stratification limit, such as
the semi-geostrophic limit considered here, within the theoretical
framework of \cite{EmbidM:1996:AveragingFG}.  More recent related
theoretical results are due to \cite{JuM:2019:LowFR}.
\cite{WetzelSS:2019:BalancedUC} perform a numerical study of balance
for a moist atmosphere with phase changes, and
\cite{KafiabadB:2018:SpontaneousIN,KafiabadB:2017:RotatingST,KafiabadB:2016:BalanceDR}
and \cite{KafiabadVY:2021:WaveAB} study the exchange of energy between
balanced and unbalanced motion.

Our derivation is based on variational asymptotics.  Approximations to
Hamilton's variational principle for the equations of rotating fluid
flow were pioneered by \cite{Salmon83,Salmon1985} in the context of
the shallow water equations and later, in \cite{Salmon1996}, for the
primitive equations of the ocean.  The basic idea is to consider the
``extended Hamilton principle'' or ``phase space variational
principle'' and use the leading order balance relation (geostrophic
balance for the shallow water equations or the thermal wind relation
for the primitive equations) to constrain the momentum variables.  The
stationary points of the constrained action with respect to variations
of the position variables give a balance relation that includes the
next-order correction to the leading order constraint.  In principle,
this method can be iterated to obtain higher-order balance relations.
For example, \citet{AllenHN2002TowardEG} derive second order models
and show that the so-called $L_1$ and $L_2$ models are numerically
well behaved.

A second idea, also proposed in \cite{Salmon1985}, is the use of a
near-identity coordinate transformation to bring the resulting
variational principle or the equations of motion into a more
convenient form.  This transformation may be applied perturbatively,
so that the resulting models coincide only up to terms of a certain
order.  From an analytic perspective, higher-order terms may matter
and it turns out that a transformation to a coordinate system in which
the Hamilton equations of motion take canonical form, the original
motivation behind this approach, leads to ill-posedness of the full
system of prognostic equations.

An even more general framework is obtained by reversing the steps
``constrain'' and ``transform''.  \citet{Oliver:2006:VariationalAR}
noted that is possible to assume an entirely general near-identity
change of coordinates and expand the transformed Lagrangian in powers
of the small parameter.  Whenever the transformation is chosen such
that the Lagrangian is degenerate to the desired order, the
variational principle \emph{implies} a phase space constraint.  This
approach gives rise to a greater variety of candidate models that are
not accessible via Salmon's approach.  In particular, it allows us to
retain some important mathematical features such as regularity of
potential vorticity inversion.  In the context of the shallow water
equations on the $f$-plane, it turns out that the $L_1$ model first
proposed by Salmon is already optimal among a larger family of models
\citep{DritschelGO:2016:ComparisonVB}.  In other configurations, such as
when the Coriolis parameter is spatially varying \citep{Oliver:2013},
or in the fully three dimensional situation considered here, the
additional flexibility that comes from putting the change of
coordinates first is necessary.

While our scaling is consistent with the geostrophic momentum
approximation \citep{eliassen1948quasi}, the derivation and the
resulting equations are different.  \cite{hoskins1975geostrophic} has
shown that the geostrophic momentum approximation can be formulated as
potential vorticity advection in transformed coordinates, known as the
semigeostrophic equations.  His transformation has subsequently been
interpreted as a Legendre transformation, providing a sense of
generalized solutions via the theory of optimal transport
\citep{cullen1984extended,benamou1998weak,Cullen:2006:MathematicalTL,
Colombo:2017:FlowsNS,RoulstoneS:1997:MathematicalST}.  On the other
hand, $L_1$-type models, at least in the shallow water context, have
strong solutions in a classical sense \citep{CalikOV:2013:GlobalWG}.
Semi-geostrophic theory can be seen as a particular choice of
truncation and transformation in the framework of variational
asymptotics \citep {Oliver:2014:VariationalDG}; for a different view
on generalized semigeostrophic theory, see
\cite{McIntyreR:2002:HigherAA}.

In this work, we show that variational asymptotics in the
semi-geostrophic regime can be done directly for the three-dimensional
Euler--Boussinesq system with full Coriolis force without any
preparatory approximations.  Our motivation derives, first, from the
role of the Boussinesq equation as the common parent model of nearly
all of geophysical fluid dynamics.  Second, we would like to see how
``non-traditional'' Coriolis effects, associated with the vertical
component of the Coriolis force and horizontal Coriolis forces coming
from the vertical velocity, enter the balance dynamics.
Non-traditional effects are significant in a variety of circumstances
\citep{GerkemaS:2005:Near-inertialWN,GerkemaS:2005:Near-inertialWO,GerkemaZM:2008:GeophysicalAF,brummel96,juarez02,Hayashi-2012}
and are also studied in the context of other reduced models such as
layers of shallow water
\citep{stewart2010multilayer,stewart2012multilayer} and models on the
$\beta$-plane \citep{Dellar:2011:VariationsBP}.

Our approach is similar, in principle, to the primitive equation case
\citep{OliverV:2016:GeneralizedLS}, and we find that the resulting
balance models have the same structure: a kinematic balance relation
which is elliptic under suitable assumptions, the prognostic equation
for the three-dimensional tracer field, and an additional prognostic
equation for a scalar field over the two-dimensional horizontal domain
which is linked to the undetermined constant of integration in the
thermal wind relation.  This field, in the present setting, takes the
form of a skewed relative vorticity averaged along the axis of
rotation.

On the technical level, the computations are considerably more
difficult than for the primitive equation.  Some expressions, such as
the thermal wind relation, take a natural form in an oblique
coordinate system where the axis of rotation takes the role of the
``vertical''.  Other expressions, most notably the top and bottom
boundary conditions and the gravitational force, are most easily
described in the usual Cartesian coordinates.  This incompatibility
requires a detailed study of averaging and decomposition of vector
fields in oblique coordinates, and of the translations between oblique
and Cartesian coordinates.

The remainder of the paper is organized as
follows. Section~\ref{s.var-principle} recalls the variational
derivation for the Euler--Boussinesq equations in the language of the
Euler--Poincar\'e variational principle.
Section~\ref{s.v-asymptotics} explains the general setting for
variational asymptotics in this framework.  In
Section~\ref{s.mapping-decomp}, we set up an oblique coordinate system
whose vertical direction is aligned with the axis of rotation and
introduce the notion of averaging along this axis.
Section~\ref{s.twr} discusses the leading order balance, the thermal
wind relation, on the mid-latitude $f$-plane with full Coriolis force.
In Section~\ref{s.l1}, we derive the first order balance model
Lagrangian.  To ensure that the Lagrangian is maximally degenerate, we
carefully decompose the expression for kinetic energy into the parts
that can be removed by a suitable choice of the transformation vector
field and a residual component which cannot be removed.  In
Section~\ref{s.l1-variation}, we derive the balance model
Euler--Poincar\'e equations from the balance model Lagrangian.  In
Section~\ref{s.bal-comp}, we find that it can be decomposed into a
single evolution equation for a scalar field in the two horizontal
variables, and a kinematic relationship for all other components.  We
show that this kinematic relationship is elliptic under the assumption
of stable stratification and sufficiently small fluctuations in all
prognostic fields.  Section~\ref{s.solution} discusses the
reconstruction of the full velocity field from the balance relation
and the prognostic variables. We finally give a brief discussion and
conclusions.  Finally, four technical appendices contain results on
averaging along the axis of rotation, the decomposition of vector
field in oblique coordinates, associated inner product identities, and
some details of the computation of the balance model Lagrangian.

\section{Variational principle for the Boussinesq equations}
\label{s.var-principle}

In this section, we recall the derivation of the equations of motion
via the Hamilton principle, here in the abstract setting of
Euler--Poincar\'{e} theory.  We also set up the derivation of reduced
equations of motion via degenerate variational asymptotics.

We write $\Diff_\mu(\cD)$ to denote the group of $H^s$-class
volume-preserving diffeomorphisms on $\cD$ that leave its boundary
invariant.  The associated ``Lie algebra'' of vector fields is the
space
\begin{equation}
  \Vdiv = \{ \bu \in H^s(\cD, \bbR^3) \colon \grad \cdot \bu = 0, \, \bk
  \cdot \bu = 0 \text{ at } z=-1, 0 \} \,.
\end{equation}
Here, $H^s$ is the usual Sobolev space of order $s$ consisting of
functions with square integrable weak derivatives up to order $s$.
For $s>5/2$, $\Diff_\mu(\cD)$ is a smooth infinite-dimensional
manifold and $\Vdiv$ is its tangent space at the identity
\citep{Palais:1968:FoundationsGN, EbinM:1970:GroupsDM}.

Let $\Eta=\Eta( \, \cdot \,, t) \in \Diff_\mu(\cD)$ denote the
time-dependent flow generated by a time-dependent vector field
$\bu = \bu( \, \cdot \,, t) \in \Vdiv$, i.e.,
\begin{equation}
\label{e.flow}
  \partial_t \Eta (\ba,t) = \bu (\Eta(\ba,t),t) 
\end{equation}
or $\dot \Eta = \bu \circ \Eta$ for short; here and in the following
we use the symbol ``$\circ$'' to denote the composition of maps and
the dot-symbol to denote the \emph{partial} time derivative.  In the
Lagrangian description of fluid flow, $\bx(t) = \Eta(\ba,t)$ is the
trajectory of a fluid parcel initially located at $\ba \in \cD$.  We
retain the letter $\bx$ for Eulerian positions and $\ba$ for
Lagrangian labels throughout.

To formulate the variational principle, we note that the continuity
equation \eqref{e.non-boussinesq.c} is equivalent to
\begin{equation}
   \rho \circ \Eta = \rho_0 \,,
\end{equation}
where $\rho_0$ denotes the given initial density field.  The
Lagrangian for the non-dimensional Euler--Boussinesq system is given
by
\begin{align}
\label{eqn:lagrangian}
  L(\Eta, \dot{\Eta}; \rho_0)
  & = \int_\cD \bR \circ \Eta \cdot \dot{\Eta}
      + \frac{\eps}{2} \, \lvert \dot{\Eta} \lvert^2
      - \rho \, \bk \cdot \Eta \, d \ba
    \notag \\
  & = \int_\cD \bR \cdot \bu + \frac{\eps}{2} \, \lvert \bu \lvert^2
      - \rho \, z \, d\bx
      \equiv \ell (\bu,\rho) \,.
\end{align}
The vector $\bR$ is a vector potential for the Coriolis vector, i.e.,
$\grad \times \bR = \bOmega$; it arises from the transformation into a
rotating frame of reference \cite[e.g.][]{LandauL:1976:Mechanics}.
Here, on the $f$-plane, we can choose
\begin{equation}
    \bR = \frac{1}{2} \sJ \bx
    = \frac{1}{2} \begin{pmatrix}
        c z  - s y  \\
        s x \\
        - c x 
      \end{pmatrix} \,,
\end{equation}
where $\sJ$ is the skew-symmetric matrix
\begin{equation}
  \label{e.J}
    \sJ
    = \begin{pmatrix}
    0 & -s  & c \\
    s & 0 & 0 \\
    -c & 0 & 0
    \end{pmatrix} \,.
\end{equation}
As we see in the second line of \eqref{eqn:lagrangian}, the Lagrangian
can be written as a functional of $\bu$ and $\rho$ alone.  In this
form, it is referred to as the reduced Lagrangian $\ell$.  Computing
variations of the full Lagrangian $L$ with respect to the flow map
$\Eta$ is equivalent to computing variations of the reduced Lagrangian
$\ell$ with respect to $\bu$ and $\rho$ subject to so-called Lin
constraints. This is summarized in the following theorem.

\begin{theorem}[\citealp{HolmMR:1998:EulerPE}] \label{t.ep}
Consider a curve $\Eta$ in $\Diff_\mu(\cD)$ with Lagrangian velocity
$\dot \Eta$ and Eulerian velocity $\bu \in \Vdiv$.  Then the
following are equivalent.
\begin{enumerate}[label={\upshape(\roman*)}]
\item $\Eta$ satisfies the Hamilton variational principle
\begin{equation}
  \delta \int_{t_1}^{t_2} L(\Eta, \dot \Eta; \rho_0) \, dt = 0
\end{equation}
with respect to variations of the flow map
$\delta \Eta = \bv \circ \Eta$, where $\bv$ is a curve in $\Vdiv$
vanishing at the temporal end points.

\item $\bu$ and $\rho$ satisfy the reduced Hamilton principle 
\begin{equation}
  \delta \int_{t_1}^{t_2} \ell(\bu, \rho) \, dt = 0 
\end{equation}
with respect to variations $\delta \bu$ and $\delta \rho$ that are
subject to the Lin constraints
$\delta \bu = \dot \bv + \bu \cdot \grad \bv - \bv \cdot \grad \bu$
and $\delta \rho + \bv \cdot \grad \rho = 0$, where $\bv$ is a curve
in $\Vdiv$ vanishing at the temporal end points.

\item $\bm$ and $\rho$ satisfy the Euler--Poincar\'{e} equation
\begin{equation}
\label{e.EP-abstract}
  \int_{\cD} (\partial_t + \mathcal{L}_{\bu}) \bm \cdot \bv
    + \frac{\delta \ell}{\delta \rho} \,
      \mathcal{L}_{\bv} \rho \, d\bx = 0 
\end{equation}
for every $\bv \in \Vdiv$, where  $\mathcal{L}_{\bu}$ is the Lie
derivative in the direction of $\bu$ and
\begin{equation}
  \bm = \frac{\delta \ell}{\delta \bu}
\end{equation}
is the momentum one-form.
\end{enumerate}
\end{theorem}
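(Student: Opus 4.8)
The plan is to follow the proof of the Euler--Poincar\'e theorem of \citet{HolmMR:1998:EulerPE}, adapted to the present domain $\cD = \bbT^2 \times [-1,0]$ with its rigid-lid boundary. The argument splits into two equivalences: (i)~$\Leftrightarrow$~(ii) is a change of variables translating variations of the flow map $\Eta$ into the constrained variations of the Eulerian fields $\bu$ and $\rho$; and (ii)~$\Leftrightarrow$~(iii) is an integration by parts in space and time that moves all derivatives off the free variation field $\bv$ and onto the momentum one-form $\bm$. The geometry of $\cD$ enters only through the verification that boundary terms from the spatial integration by parts vanish.

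For (i)~$\Leftrightarrow$~(ii), let $\Eta_\lambda$ be a one-parameter family of curves in $\Diff_\mu(\cD)$ with $\Eta_0 = \Eta$ and $\partial_\lambda\big|_{\lambda=0}\Eta_\lambda = \bv \circ \Eta$, where $\bv$ is a curve in $\Vdiv$ vanishing at $t_1, t_2$; by the manifold structure recalled above, every admissible variation of $\Eta$ arises this way, and the correspondence $\bv \leftrightarrow \delta\Eta$ is a bijection. Differentiating $\partial_t \Eta_\lambda = \bu_\lambda \circ \Eta_\lambda$ with respect to $\lambda$, and likewise the advection constraint $\rho_\lambda \circ \Eta_\lambda = \rho_0$, then using the chain rule and composing with $\Eta^{-1}$, yields exactly the Lin constraints $\delta\bu = \dot\bv + \bu\cdot\grad\bv - \bv\cdot\grad\bu$ and $\delta\rho + \bv\cdot\grad\rho = 0$. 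Since $L(\Eta_\lambda, \dot\Eta_\lambda; \rho_0) = \ell(\bu_\lambda, \rho_\lambda)$ by the second line of \eqref{eqn:lagrangian}, the two action functionals agree along the correspondence, so one is stationary precisely when the other is.

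For (ii)~$\Leftrightarrow$~(iii), using the chain rule and $\bm = \delta\ell/\delta\bu$,
\begin{equation}
  \delta \int_{t_1}^{t_2} \ell(\bu,\rho) \, dt
  = \int_{t_1}^{t_2} \int_\cD \bm \cdot (\dot\bv + \bu\cdot\grad\bv - \bv\cdot\grad\bu)
    - \frac{\delta\ell}{\delta\rho} \, \bv\cdot\grad\rho \, d\bx \, dt \,.
\end{equation}
Integrating the $\bm\cdot\dot\bv$ term by parts in time annihilates the endpoint contributions because $\bv(t_1) = \bv(t_2) = 0$. For the spatial terms, a componentwise integration by parts, using that $\bu$ is divergence-free, gives
\begin{equation}
  \int_\cD \bm \cdot (\bu\cdot\grad\bv - \bv\cdot\grad\bu) \, d\bx
  = - \int_\cD (\mathcal{L}_\bu \bm) \cdot \bv \, d\bx \,,
\end{equation}
with $\mathcal{L}_\bu$ the Lie derivative acting on the one-form density $\bm$. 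Recognizing $\bv\cdot\grad\rho = \mathcal{L}_\bv\rho$, we obtain
\begin{equation}
  \delta \int_{t_1}^{t_2} \ell \, dt
  = - \int_{t_1}^{t_2} \int_\cD (\partial_t + \mathcal{L}_\bu)\bm \cdot \bv
    + \frac{\delta\ell}{\delta\rho} \, \mathcal{L}_\bv\rho \, d\bx \, dt \,,
\end{equation}
so the reduced Hamilton principle holds for all admissible $\bv$ if and only if \eqref{e.EP-abstract} holds for every $\bv \in \Vdiv$.

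The step requiring genuine care is the spatial integration by parts leading to the Lie-derivative identity: the boundary contributions are integrals over $\partial\cD$ of expressions proportional to $\bu \cdot \bn$ and $\bv \cdot \bn$, and these vanish precisely because every element of $\Vdiv$ satisfies $\bk \cdot \bu = 0$ on $z = -1, 0$ and is periodic in the horizontal, hence everywhere tangent to $\partial\cD$. A secondary point is that the conclusion of (iii) is stated weakly, quantified over all $\bv \in \Vdiv$, which avoids invoking a Helmholtz-type projection onto divergence-free fields; the passage between the finite-regularity functional-analytic setting and these formal manipulations is justified as in \citet{HolmMR:1998:EulerPE}, given that $\Diff_\mu(\cD)$ is a smooth manifold with tangent space $\Vdiv$ for $s > 5/2$.
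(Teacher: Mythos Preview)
Your proof is correct and follows the standard Euler--Poincar\'e argument. Note, however, that the paper does not actually prove this theorem: it is stated with attribution to \citet{HolmMR:1998:EulerPE} and used as a black box, so there is no in-paper proof to compare against. Your derivation is precisely the expected one from that reference, with the appropriate observation that the boundary terms from spatial integration by parts vanish because members of $\Vdiv$ are tangent to $\partial\cD$.
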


In the language of vector fields on a region of $\bbR^3$,
\eqref{e.EP-abstract} reads
\begin{equation}
\label{eqn:variational3}
  \int_{\cD} \Bigl(
         \partial_t \bm + (\grad \times \bm)\times \bu
         + \grad (\bm \cdot \bu)
         + \frac{\delta \ell}{\delta \rho} \, \grad \rho
       \Bigr) \cdot \bv \, d\bx = 0 
\end{equation}
for every $\bv \in \Vdiv$.  This implies that the term in parentheses
must be zero up to a gradient of a scalar potential $\phi$.  

For the Euler--Boussinesq Lagrangian \eqref{eqn:lagrangian},
\begin{equation}
  \bm = \frac{\delta \ell}{\delta \bu} = \bR + \eps \, \bu
  \qquad \text{and} \qquad
  \frac{\delta \ell}{\delta \rho} = - z \,.
\end{equation}
Setting $\phi = - p - z \, \rho - \frac{1}{2} \, \eps \, \lvert \bu \rvert^2$
and using the vector identity
$(\grad \times \bu) \times \bu= \bu \cdot \grad \bu - \tfrac{1}{2}
\grad |\bu|^2$, we see that \eqref{eqn:variational3} implies the
Euler--Boussinesq momentum equation \eqref{e.non-boussinesq}.

As the Lagrangian $L$ is invariant under time translation, the
corresponding Hamiltonian
\begin{equation}
\label{e.hamiltonian}
  H = \int_\cD \frac{\delta \ell}{\delta \bu} \cdot \bu \, d\bx
        - \ell(\bu, \rho)
    = \int_\cD \frac{\eps}2 \, \lvert \bu \rvert^2 + z \, \rho \, d\bx
\end{equation}
is a constant of the motion. A second conservation law arises via the
invariance of the Lagrangian with respect to ``particle relabeling'',
i.e., composition of the flow map with an arbitrary time-independent
map in $\Diff_\mu(\cD)$: \cite{Ripa:1981:SymmetriesCL} and
\cite{Salmon:1982:HamiltonsPE} have shown that this symmetry implies
material conservation of the Ertel \emph{potential vorticity}
\begin{equation}
  q = (\grad \times \bm) \cdot \grad \rho
    = (\bOmega + \eps \, \grad \times \bu) \cdot \grad \rho \,,
\end{equation}
i.e.\ $q$ satisfies the advection equation
\begin{equation}
    \partial_t q + \bu \cdot \grad q = 0 \,.
\end{equation}

\section{Variational asymptotics}
\label{s.v-asymptotics}

Our variational balance model is based on the following construction.
Suppose that the flow of the balance model is related to the flow of
the full model via a near-identity change of variables.  To be
explicit, let $\dot{\Eta}_\eps$ denote the Lagrangian velocity and
$\bu_\eps$ the corresponding Eulerian velocity field of the full
Euler--Boussinesq flow, i.e.
\begin{equation}
  \dot{\Eta}_\eps = \bu_\eps \circ \Eta_\eps \,.
  \label{e.etaepsdot}
\end{equation}
The corresponding Lagrangian and Eulerian balance model velocities are
denoted $\dot{\Eta}$ and $\bu$, so that
\begin{equation}
  \dot{\Eta} = \bu \circ \Eta \,.
\end{equation}
We suppose that the flow map of the full model is related to the flow
map of the balance model by a change of coordinates that is the flow
of a vector field $\bv_\eps$ with $\eps$ as the flow parameter (thus,
formally, $\eps$ plays the same role here that $t$ plays above),
namely
\begin{equation}
  \Eta_\eps' = \bv_\eps \circ \Eta_\eps \,,
  \qquad \Eta_\eps \big\vert_{\eps = 0} = \Eta \,.
  \label{e.etaepsprime}
\end{equation}
Here and in the following, we use the prime symbol to denote a
derivative with respect to $\eps$.  Cross-differentiation of
\eqref{e.etaepsdot} and \eqref{e.etaepsprime} gives
\begin{equation}
  \bu'_\eps
  = \dot{\bv}_\eps + \bu_\eps \cdot\grad \bv_\eps
    - \bv_\eps \cdot \grad \bu_\eps \,.
    \label{e.lin-constraint}
\end{equation}
Likewise, differentiation in $\eps$ of the Lagrangian density gives
\begin{equation}
  \rho_\eps' + \bv_\eps \cdot \grad \rho_\eps = 0 \,.
  \label{e.lin-constraint-2}
\end{equation}
These two relations can be seen as the Lin constraints for the
$\eps$-flow, cf.\ the Lin constraints for the $t$-flow stated in
Theorem~\ref{t.ep}.

At this point, the choice of $\bv_\eps$ is completely arbitrary and no
assumptions have been made.  However, we will always restrict
ourselves to incompressible transformations which leave the domain
invariant, so that $\bv_\eps \in \Vdiv$.

We now treat $\eps$ as a perturbation parameter, expanding
\eqref{e.lin-constraint} and \eqref{e.lin-constraint-2} in $\eps$.
Likewise, we expand the Lagrangian as a formal power series in $\eps$
and use the Lin constraints to eliminate all $\eps$-derivatives of
$\bu$ and $\rho$.  Model reduction is achieved via the following
steps.

\begin{enumerate}[label={\upshape(\roman*)}]
\item Truncate the expansion of the Lagrangian at some fixed order.
Here, we will only look at the truncation to $O(\eps)$, the first
nontrivial case.

\item Choose the expansion coefficients of the transformation vector
fields such that the resulting Lagrangian is maximally degenerate.
When computing the model reduction to first order, the zero-order
transformation vector field $\bv = \bv_\eps\vert_{\eps=0}$ is the only
choice to be made.

\item Apply the Euler--Poincar\'e variational principle to derive the
equations of motion.  Since the Lagrangian is degenerate, some degrees
of freedom will be kinematic, thus imply a phase-space constraint that
can be understood as a so-called Dirac constraint
\citep[e.g.][]{Salmon:1988:SemigeostrophicDB}. 
\end{enumerate}

The first two steps are done in reverse order relative to the original
method of \cite{Salmon1985} who constrained the system first, using
the readily available leading order balance relation, and transformed
to different coordinates second.  In fact, in his approach, the second
step is optional and it turns out that, for $f$-plane shallow water,
the so-called $L_1$ model, which skips the transformation, is superior
to all other models in a more general family
\citep{DritschelGO:2016:ComparisonVB}.  In our viewpoint, the
transformation is always necessary.  The advantage is that the
constraint arises as a consequence of the formalism and does not need
to be guessed or derived \emph{a priori}.  In simple cases, it is
possible to choose the transformation such that it vanishes to the order
considered; in this case, we reproduce Salmon's $L_1$ model.  In more
complicated cases, in particular in our setting here, it is not
possible to cancel all terms in the transformation vector field at the
order considered.  This means that a direct application of Salmon's
method would result in a model with additional spurious prognostic
variables.  We finally remark that at least in finite dimensions, the
procedure is rigorous and the resulting model is correct to the
expected order of approximation \citep{GottwaldO:2014:SlowDD}.  For
partial differential equations, this question is open; it is clear,
though, that additional conditions are necessary.

\section{Oblique vertical coordinate and oblique averages}
\label{s.mapping-decomp}

The axis of rotation defines the direction of the characteristics of
the thermal wind relation.  Here, we introduce notation used
throughout the paper to describe the decomposition of vector fields
along and perpendicular to this axis, as well as a basic averaging
operation along the axis of notation.  

Let
\begin{gather}
  \sQ = \bOmega \bOmega^\sT
  = \begin{pmatrix}
     0 & 0 & 0 \\ 0 & c^2 & cs \\
     0 & cs & s^2
    \end{pmatrix}
  \label{e.Qdef}
\end{gather}
denote the orthogonal projector onto the direction of the axis of
rotation; the projector onto the orthogonal complement is then given
by
\begin{gather}
  \label{e.Pdef}
  \sP = \sI - \sQ
  = \begin{pmatrix}
    1 & 0 & 0 \\ 0 & s^2 & -cs \\
    0 & -cs & c^2
    \end{pmatrix} \,.
\end{gather}
Note that $\bOmega$ spans the kernel of $\sJ$ and is perpendicular to
the range of $\sJ$.  Thus, $\sQ$ is the orthogonal projector onto
$\Ker \sJ$ and $\sP$ is the orthogonal projector onto $\Range \sJ$.

We introduce the oblique coordinate system
\begin{equation}
  \bx = \begin{pmatrix}
          x \\ y \\ 0
        \end{pmatrix}
        + \zeta \, \bOmega
  \equiv \sA \bxi \equiv \Chi(\bxi)
  \label{e.mapping}
\end{equation}
where $\bxi \equiv (x,\zeta)^\sT$, where $x$ and $y$ are the
horizontal coordinates of each characteristic line at the surface,
$\zeta$ is the arclength parameter along the characteristics with
$\zeta=0$ being at the surface and $\zeta = -s^{-1}$ being at the
bottom, and
\begin{equation}
    \sA = \begin{pmatrix}
     1 & 0 & 0 \\ 0 & 1 & c \\
     0 & 0 & s
    \end{pmatrix} \,.
  \label{e.Ab}
\end{equation}
We note that $\det \sA = s$.  Then, for any scalar function $f$,
\begin{equation}
  \label{e.CoV-comp}
  \partial_i (f \circ \Chi)
  = \begin{cases}
        (\partial_i f) \circ \Chi & \text{for } i = x,y \\
        (\bOmega \cdot \grad f) \circ \Chi 
        & \text{for } i=\zeta 
    \end{cases}
\end{equation}
so that, for arbitrary vector fields $\bu$, $\bv$,
\begin{subequations}
\begin{gather}
  \grad \cdot (\bv \circ \Chi)
  = (\grad \cdot \sA \bv) \circ \Chi \,,
  \label{e.div-commute} \\
  (\grad \cdot \sP \bu) \circ \Chi
  = \grad \cdot (\sS \sP \bu \circ \Chi) \,,
  \label{e.div-commute-2}
\end{gather}
\end{subequations}
where \eqref{e.div-commute-2} follows from \eqref{e.div-commute} with
$\bv = \sA^{-1} \sP \bu$ since $\sA^{-1} \sP = \sS \sP$ with
\begin{equation}
  \sS = \operatorname{diag} (1, s^{-2}, s^{-1}) \,.
  \label{e.S-def}
\end{equation}
For any function $\phi$, we define $\bar \phi$ as its mean along the
axis of rotation, i.e.,
\begin{equation}
\label{e.u_mean}
  \bar{\phi} \circ \Chi
  = s \int_{-s^{-1}}^0 \phi \circ \Chi \, d \zeta \,.
\end{equation}
Further, we
write $\hat{\phi} = \phi - \bar{\phi}$ to denote the deviation from the
mean.  The definition for vector fields is analogous.  Since, for
arbitrary functions $\phi$ and $\psi$, $\bar \psi \circ \Chi$ is
independent of $\zeta$ and $\det \sA = s$, we see that
\begin{equation}
  \label{e.barhat}
  \int_\cD \hat{\phi} \, \bar{\psi} \, d\bx
  = s \int_{\bbT^2} \int_{-s^{-1}}^0 \hat{\phi} \circ \Chi \, d \zeta \,
    \bar{\psi} \circ \Chi \, dx
  = 0 \,.
\end{equation}
In other words, mean and fluctuating components in the sense of
\eqref{e.u_mean} are $L^2$-orthogonal.

\section{Thermal wind}
\label{s.twr}

The formal leading order balance in the Euler--Boussinesq momentum
equation \eqref{e.non-boussinesq.a} gives an expression for the
\emph{thermal} or \emph{geostrophic wind} $\bu_g$,
\begin{equation}
  \bOmega \times \bu_g = - \grad p - \rho \, \bk \,.
\end{equation}
Taking the curl to remove the pressure, we obtain the \emph{thermal
wind relation}
\begin{equation}
  \label{eqn:twr}
  \bOmega \cdot \grad \bu_g
  = \begin{pmatrix} - \nabla^\bot \rho \\ 0 \end{pmatrix} \,.
\end{equation}
The characteristics of this first order equation are the lines
parallel to the axis of rotation.  With the notation set up in
Section~\ref{s.mapping-decomp}, it is easy to integrate
\eqref{eqn:twr} along its characteristic lines.  Splitting
$\bu_g = \hat \bu_g + \bar \bu_g$, we first note that \eqref{eqn:twr}
determines only the mean-free component $\hat \bu_g$.  Indeed,
\begin{gather}
  \label{e.twr-mean}
  \partial_\zeta (\hat \bu_g \circ \Chi)
  = (\bOmega \cdot \grad \hat \bu_g ) \circ \Chi
  = \begin{pmatrix}
      - \nabla^\bot \rho \circ \Chi \\ 0
    \end{pmatrix} \,.
\end{gather}
The mean-free component $\hat \bu_g$ must further satisfy
\begin{equation}
  0 = s \int_{-s^{-1}}^0 \hat \bu_g \circ \Chi \, d \zeta
    = \hat \bu_g \circ \Chi(-s^{-1})
      - s \int_{-s^{-1}}^0 \zeta \,
        \partial_{\zeta} (\hat \bu_g \circ \Chi) \, d\zeta \,,
\end{equation}
where we write $\Chi'$ to abbreviate $\Chi(x, y, \zeta')$.  Then,
\begin{align}
  \hat \bu_g \circ \Chi
  & = \hat  \bu_g \circ \Chi(-s^{-1})
      + \int_{-s^{-1}}^\zeta
        \partial_{\zeta} (\hat \bu_g \circ \Chi') \, d\zeta'
      \notag \\
  & = s \int_{-s^{-1}}^0 \zeta \,
        \partial_{\zeta} (\hat \bu_g \circ \Chi) \, d\zeta
      + \int_{-s^{-1}}^\zeta
        \partial_{\zeta} (\hat \bu_g \circ \Chi') \, d\zeta' \,.
  \label{eqn:g}
\end{align}
Inserting the thermal wind relation \eqref{e.twr-mean}, we find that
the vertical component of the thermal wind is independent of $\zeta$,
so $\bk \cdot \hat \bu_g = 0$, and the horizontal components of the
thermal wind are given by
\begin{subequations}
\label{e.ug}
\begin{equation}
  \hat u_g = \nabla^\bot \theta 
\end{equation}
with
\begin{equation}
    \theta \circ \Chi
    = - s \int_{-s^{-1}}^0 \zeta \, \rho \circ \Chi \, d\zeta
      - \int_{-s^{-1}}^\zeta \, \rho \circ \Chi' \, d\zeta' \,.
  \label{e.Theta}
\end{equation}
\end{subequations}
Equation \eqref{e.ug} shows that $\hat u_g$ is horizontally
divergence-free.  Since $\bk \cdot \hat \bu_g = 0$, we also have
$\grad \cdot \bu_g = 0$.

\section{Derivation of the first-order balance model}
\label{s.l1}

We now implement the procedure outlined in
Section~\ref{s.v-asymptotics} at first order in $\eps$.  Using the Lin
constraints for the $\eps$-flow, \eqref{e.lin-constraint} and
\eqref{e.lin-constraint-2}, we can write
\begin{equation}
  \bu_\eps = \bu + \eps \, (\dot{\bv} + \bu \cdot\grad \bv
    - \bv \cdot \grad \bu) + O(\eps^2)
\end{equation}
and 
\begin{equation}
  \rho_\eps = \rho
  - \eps \, \grad \cdot (\rho \bv) + O(\eps^2) \,.
\end{equation}
The transformation vector field $\bv$ will be specified in the
following.  By construction, we assume that all flows are
volume-preserving and leave the domain invariant, so that
$\bu, \bv \in \Vdiv$. For technical
reasons, we also assume that the domain-mean of $\bu$ is zero.  This
is not a restriction since the assumption is only removing a steady
solid-body translation from the system, which is a constant of the
motion of the full Euler--Boussinesq system so that it remains zero if
it vanishes initially.  The transformation vector field $\bv$ shall
also be chosen so as not to generate a solid body translation in balance
model coordinates.

Inserting these relations into the Euler--Boussinesq Lagrangian
\eqref{eqn:lagrangian}, expanded in powers of $\eps$ and truncated to
$O(\eps)$, we obtain after a short computation
\begin{align}
    L & = \int_\cD \bR \circ \Eta_\eps \cdot
    \dot \Eta_\eps + \tfrac{\eps}{2} \, \lvert \dot \Eta_\eps \lvert^2
    \, d \ba - \int_\cD \rho_\eps \, z \, d \bx
    \notag \\
    & = \int_\cD \bu \cdot \sJ \bx - \rho \, z \, d\bx
        + \eps \int_\cD \, \bu \cdot \sJ \bv
        + \tfrac{1}{2} \, \lvert \bu \rvert^2 - \rho \, \bv \cdot \bk \, d \bx
    \notag \\
    & = L_0 + \eps \, L_1 \,.
    \label{e.l-expansion}
\end{align}
Here we have used the divergence theorem to rewrite the potential
energy contribution to $L_1$, where the boundary integral vanishes due
to the boundary condition $\bk \cdot \bv = 0$.

We must now choose the transformation vector field $\bv$ such that it
removes, to the extent possible, all terms that are quadratic in
components of $\bu$.  In preparation, we decompose the kinetic energy
part of $L_1$ as
\begin{equation}
  \int_\cD \lvert \bu \lvert^2 \, d\bx
  = \int_\cD \lvert \hat \bu \rvert^2
        + 2 \, \hat \bu \cdot \bar \bu
        + \lvert \bar \bu \rvert^2 \, d \bx \,.
\label{e.kinetic}
\end{equation}
The cross term in \eqref{e.kinetic} vanishes due to \eqref{e.barhat}.
The square terms are decomposed into terms that can be written as an
$L^2$-pairing with $\sP \hat \bu$, and a final remainder term which
cannot.  Starting with the contribution from
$\lvert \hat \bu \rvert^2$, we write
\begin{equation}
  \int_\cD \lvert \hat \bu \lvert^2 \, d\bx
  = \int_\cD \hat \bu \cdot \sP \hat \bu
      + \hat \bu \cdot \sQ \hat \bu \, d\bx
  = \int_\cD \hat \bu \cdot
      \sP (\hat \bu + \bC[\hat \bu]) \, d\bx \,,
\end{equation}
using Lemma~\ref{l.w-Qu} in the final equality.  The contribution from
$\lvert \bar \bu \rvert^2$ is split differently.  Setting
$\sS_0 = \operatorname{diag} (1, s^{-2}, 0)$, noting that
$s \, (\sI - \sS_0 \sP) \bar \bu = \bOmega \, \bar u_3$, and noting
that $(\sS_0\sP)^\sT-\sS_0\sP$ is skew so that $\sS_0\sP \bar \bu
\cdot \bar \bu - \bar \bu \cdot \sS_0 \sP \bar \bu =0$, we write
\begin{align}
  \int_\cD |\bar \bu|^2 \, d\bx
  & = \int_\cD (\sI + \sS_0 \sP) \bar \bu \cdot
        (\sI - \sS_0 \sP) \bar \bu
      + \sS_0 \sP \bar \bu \cdot \sS_0 \sP \bar \bu \, d\bx
      \notag \\
  & = - s^{-1} \int_\cD \hat \bu \cdot \sP
        \grad (\bOmega \cdot (\sI + \sS_0 \sP) \bar \bu) \,
        z \, d \bx
      + \int_\cD |\sS_0 \sP \bar \bu|^2 \, d\bx \,.
  \label{e.l1-kinenergy}
\end{align}
The second equality is based on Lemma~\ref{l.phi-bar-u3} with
$\bar \phi = s^{-1} \, \bOmega \cdot (\sI + \sS_0 \sP) \bar \bu$.
Note that Lemma~\ref{l.phi-bar-u3} could be used in different ways so
long as we retain a pairing of some function $\bar \phi$ with
$\bar u_3$.  Our chosen splitting is distinguished, due to
Lemma~\ref{l.mean-divergence1}, by the fact that the remainder
integral in \eqref{e.l1-kinenergy} is proportional to the kinetic
energy of a \emph{divergence-free} two-dimensional vector field.  In
the following, this remainder term will be the only term that yields
an evolution equation in the variational principle.  As
$\sS_0 \sP \bar \bu$ is divergence-free, this evolution equation can
always be written in terms of a scalar stream function, i.e., is
determined by the evolution of a single scalar field.  Any other
splitting would yield a two-component evolution equation except for
one special tilt of the axis of rotation, which is not desirable.

Collecting terms, we find
\begin{align}
    \int_\cD \lvert \bu \lvert^2 \, d\bx
    = \int_\cD \hat \bu \cdot \sP \hat \bV \, d \bx
      + \int_\cD \lvert \sS_0 \sP \bar \bu \lvert^2 \, d \bx
  \label{e.KE}
\end{align}
with 
\begin{equation}
  \label{e.V}
  \bV = \hat \bu
    + \bC[\hat \bu]
    - s^{-1} \, \grad \bigl( \bOmega \cdot
      (\sI + \sS_0 \sP) \bar \bu \bigr) \, z \,.
\end{equation}
Even though $\bV$ is not necessarily mean-free, only its mean-free
component $\hat \bV$ contributes to the Lagrangian due to the pairing
with $\hat \bu$.

Since $\sJ$ has a one-dimensional kernel, it is impossible to remove
all quadratic terms from the $L_1$ Lagrangian, but choosing the
transformation vector field $\bv$ as a solution to the equation
\begin{equation}
  \label{e.v}
  \sJ \bv = - \frac{1}{2} \sP \hat \bV \,,
\end{equation}
up to terms that only depend on $\rho$, we can make $L_1$ affine in
$\hat \bu$: Noting that $\sJ$ is invertible on $\Range \sP$ with
pseudo-inverse $\sJ^\sT$, and further that
$\sP \sJ^\sT = \sJ^\sT = \sJ^\sT \sP$, we seek $\bv$ in the form
\begin{equation}
  \bv = \sP \hat \bv + \sQ \hat \bv + \bar \bv
  \label{e.bv}
\end{equation}
with
\begin{equation}
\label{e.Pv-hat}
  \sP \hat \bv
  = - \frac{1}{2} \sJ^\sT \, \hat \bV + \lambda \, \sJ^\sT \hat \bV_g \,.
\end{equation}
In this expression, $\lambda$ is a free parameter and
\begin{equation}
  \bV_g = \hat \bu_g + \bC[\hat \bu_g]
  \label{e.Ug}
\end{equation}
which takes the same form as $\bV$ with $\bu$ replaced by
$\bu_g$. Since $\bu_g$ is not constrained by the thermal wind
relation, we do not include a term that matches the third term of
\eqref{e.V} into the ansatz for $\bV_g$.

The terms $\sQ \hat \bv$ and $\bar \bv$ in \eqref{e.bv} are chosen
such that $\bv$ becomes divergence free and tangent to the top and
bottom boundaries.  Using the construction from
Lemma~\ref{l.constraint}, we write
$\sQ \hat \bv \equiv \bOmega \, \hat g$, where
\begin{gather}
  \hat g \circ \Chi 
  = \hat g \circ \Chi (-s^{-1}) - \int_{-s^{-1}}^\zeta \grad \cdot
      (\sS \sP \hat \bv \circ \Chi') \, d \zeta' \,.
  \label{e.g2}     
\end{gather}
By Lemma~\ref{l.div-uhat}, $\hat \bv$ is divergence-free.  By
Lemma~\ref{l.div-u-bar}, $\bar \bv$ can be chosen such that the zero
flux boundary condition
\begin{align}
  \bar v_3 \circ \Chi
  & = - \bk \cdot \sP \hat \bv \circ \Chi (-s^{-1})
      - s \, \hat g \circ \Chi (-s^{-1})
      \notag \\
  & = - \bk \cdot \sP \hat \bv \circ \Chi (0)
      - s \, \hat g \circ \Chi (0) \,.
\label{e.vn3-bar}
\end{align}
are satisfied and, moreover, $\bar \bv$ is divergence free.
Lemma~\ref{l.uJv} shows that the choice of the horizontal components
$\bar v$ will not enter the computation of the equations of motion.
Only the contribution from $\bar v_3$ will appear, but it is directly
a function of $\hat v_3$ by \eqref{e.vn3-bar}, so that the final
expression will not contain any references to $\bar \bv$.

Combining the contributions from rotation and kinetic energy, we find
\begin{align}
\label{e.l1-KE}
  \int_\cD \bu \cdot \sJ \bv
    + \tfrac{1}{2} \, \lvert \bu \lvert^2 \, d\bx
  & = \int_\cD \hat \bu \cdot \sJ \sJ^\sT \,
        \bigl( - \tfrac{1}{2} \, \hat \bV + \lambda \hat \bV_g\bigr)
        + \tfrac{1}{2} \, \hat \bu \cdot \sP \hat \bV
        + \tfrac{1}{2} \, \lvert \sS_0 \sP \bar \bu \lvert^2 \, d\bx
    \notag \\
  & = \int_\cD
        \lambda \, \hat \bu \cdot \sP \hat \bV_g
        + \tfrac{1}{2} \, \lvert \sS_0 \sP \bar \bu \lvert^2 \, d\bx
    \notag \\
  & = \int_\cD
        \lambda \, \hat u \cdot \hat u_g
        + \tfrac{1}{2} \, \lvert \sS_0 \sP \bar \bu \lvert^2 \, d\bx 
\end{align}
where, in the last equality, we have used Lemma~\ref{l.w-Qu} and the
fact that the geostrophic velocity is exclusively horizontal.  The
contribution to the $L_1$-Lagrangian from the potential energy term is
\begin{equation}
\label{e.l1-PE}
  - \int_\cD \rho \, \bv \cdot \bk \, d\bx
  = \int_\cD \hat u_g \cdot 
      \bigl( \tfrac{1}{2} \, \hat u - \lambda \, \hat u_g \bigr) \, d\bx \,;
\end{equation}
the details of this calculation are given in
Appendix~\ref{a.pe-derivation}.  Altogether, the $L_1$-Lagrangian then
reads
\begin{align}
  L_1
  & = \int_\cD \lambda \, \hat u \cdot \hat u_g
      + \tfrac{1}{2} \, \lvert \sS_0 \sP \bar \bu \lvert^2
      + \hat u_g \cdot
        (\tfrac{1}{2} \, \hat u - \lambda \, \hat u_g) \, d\bx
    \notag \\
  & = \int_\cD \nu \, \hat u \cdot \hat u_g
        + \tfrac12 \, \bar \bu \cdot \sM \bar \bu
        - \lambda \, \lvert \hat u_g \rvert^2 \, d \bx \,,
  \label{e.l1}
\end{align}
where $\nu = \lambda + \tfrac12$ and
\begin{equation}
    \sM = \sP \sS_0^2 \sP = \begin{pmatrix} 1 & 0 & 0 \\
    0 & 1 & - c/s \\
    0 & - c/s & c^2/s^2
    \end{pmatrix} \,.
\end{equation}
In the following, we choose $\lambda$ such that $\nu>0$.

\section{Derivation of the balance model equations of motion}
\label{s.l1-variation}

Taking the variation of \eqref{e.l1}, we obtain
\begin{align}
    \delta L_1
    & = \int_\cD \delta \bu \cdot (\nu \, \hat \bu_g + \sM \bar \bu)
          + \nu \, u \cdot \delta \hat u_g
          - 2 \lambda \, \delta \hat u_g \cdot \hat u_g  \, d\bx
    \notag \\
    & = \int_\cD \delta \bu \cdot \bp
        + \delta \hat u_g \cdot \hat b \, d \bx \,,
\label{e.variationL1}
\end{align}
where
\begin{equation}
   \bp = \sM \bar \bu + \nu \, \hat \bu_g
   \qquad \text{and} \qquad
   \hat b = \nu \, \hat u - 2\lambda \, \hat u_g \,,
\label{e.p-b}
\end{equation}
To rewrite the second term on the right of \eqref{e.variationL1}, we
insert the expression for the geostrophic velocity \eqref{e.ug},
change variables, and recall that horizontal derivatives and
composition with $\Chi$ commute, so that
\begin{align}
  \int_\cD \delta \hat u_g \cdot \hat b \, d\bx
  & = - s \int_\cD \int_{-s^{-1}}^\zeta
          \nabla^\bot \delta \rho \circ \Chi' \, d\zeta' \cdot
          \hat b \circ \Chi \, d\bxi
    \notag \\
  & = s \int_\cD \nabla^\bot \delta \rho \circ \Chi
          \cdot \int_{-s^{-1}}^\zeta \hat b \circ \Chi' \, d\zeta' \, d\bxi
    \notag \\
  & = - s \int_\cD \delta \rho \circ \Chi \,
          \nabla^\bot \cdot \int_{-s^{-1}}^\zeta 
          \hat b \circ \Chi' \, d\zeta' \, d\bxi
    \notag \\
  & = - \int_\cD \delta \rho \, \nabla^\bot \cdot B \, d\bx \,,
  \label{e.delta-ug}
\end{align}
where $B$ is the anti-derivative of $b$ along the axis of
rotation, i.e.,
\begin{equation}
\label{e.B}
  B = \nu \, U - 2\lambda \, U_g 
\end{equation}
where
\begin{equation}
  U \circ \Chi
  = \int_{-s^{-1}}^\zeta \hat u \circ \Chi' \, d\zeta'
 \label{e.bU}
\end{equation}
and $U_g$ is defined likewise.  Inserting \eqref{e.delta-ug} into
\eqref{e.variationL1}, we have
\begin{equation}
  \delta L_1 = \int_\cD \delta \bu \cdot \bp
  - \delta \rho \, \nabla^\bot \cdot B \, d \bx \,.
\end{equation}
We recall the general Euler--Poincar\'e equation
\eqref{eqn:variational3}, which can be written
\begin{equation}
  \partial_t \bm + (\grad \times \bm) \times \bu
  + \frac{\delta \ell}{\delta \rho} \, \grad \rho = \grad \tilde \phi
  \label{e.general-ep}
\end{equation}
with $\tilde \phi$ an arbitrary scalar field.  Here,
\begin{equation}
\label{e.var}
  \bm = \frac{\delta \ell}{\delta \bu} = \bR + \eps \, \bp
  \qquad \text{and} \qquad
  \frac{\delta \ell}{\delta \rho} = - z - \eps \, \nabla^\bot \cdot B \,,
\end{equation}
so that, with $\tilde \phi = -\phi - z \rho$,
\begin{equation}
  \label{e.euler-poincare}
  \bOmega \times \bu + \rho \bk
  + \eps \, (\partial_t \bp + (\grad \times \bp) \times \bu
  - \grad \rho \, \nabla^\bot \cdot B) = - \grad \phi \,.
\end{equation}
As $c \, \partial_z = - s \, \partial_y$ when applied to averaged
quantities, we find that
$\grad \times \sM \bar \bu = \bOmega \, \omega$ with
$\omega = s^{-1} \, \nabla^\bot \cdot \sM_h \bar \bu$.  Hence,
\begin{equation}
  \bxi
  \equiv \grad \times \bp
  = \bOmega \, \omega + \nu \, \bgamma 
  \label{e.xi}
\end{equation}
with
\begin{equation}
  \bgamma 
  = \grad \times \hat \bu_g
  = - \grad \times \curl \theta
  = \begin{pmatrix}
      - \nabla \partial_z \theta \\
      \Delta \theta
    \end{pmatrix} \,,
  \label{e.xi2}
\end{equation}
where we recall that $\hat u_g = \nabla^\bot \theta$, write $\curl f = \grad \times (0,0,f)$ to identify
the curl of a scalar field with the curl of a vector field oriented in
the vertical, and use $\Delta$ to denote the \emph{horizontal}
Laplacian.  With this notation in place, taking the curl of
\eqref{e.euler-poincare} and noting that
$\grad \times (\bxi \times \bu ) = \bu \cdot \grad \bxi - \bxi \cdot
\grad \bu$ as both $\bu$ and $\bxi$ are divergence-free, we find
\begin{equation}
\label{e.ep-cross}
  - \bOmega \cdot \grad \bu
  + \grad \times (\rho \bk)
  + \eps \, \bigl(
      \partial_t \bxi
      + \bu \cdot \grad \bxi
      - \bxi \cdot \grad \bu
      + \grad \rho \times \grad \nabla^\bot \cdot B
    \bigr) = 0 \,.
\end{equation}
The corresponding balance model Hamiltonian, via
\eqref{e.hamiltonian}, is given by
\begin{equation}
  H = \int_\cD \rho \, z
      + \eps \, \bigl( \tfrac{1}{2} \, \bar \bu \cdot \sM
        \bar \bu + \lambda \, |\hat \bu_g|^2 \bigr) \, d \bx \,. 
\end{equation}
The potential vorticity for the balance model reads
\begin{align}
\label{e.potential_vorticity}
  q & = (\grad \times \bm) \cdot \grad \rho
    \notag \\
    & = (\bOmega + \eps \, \bOmega \, \omega + \eps \nu \, \bgamma)
        \cdot \grad \rho
    \notag \\
    & = (s + \eps s \, \omega + \eps \nu \, \Delta \theta) \, \partial_z \rho
        + (\Omega + \eps \, \Omega \, \omega -
           \eps \nu \, \partial_z \nabla \theta) \cdot \nabla \rho
    \notag \\
    & = - (s + \eps s \, \omega + \eps \nu \, \Delta \theta) \,
          \partial_z (\bOmega \cdot \grad \theta)
        - (\Omega + \eps \, \Omega \, \omega -
           \eps \nu \, \partial_z \nabla \theta)
           \cdot \nabla (\bOmega \cdot \grad \theta) \,,
\end{align}
where $\Omega=(0,c)$. In the last equality, we have used that
$\rho=-\bOmega \cdot \grad \theta$.  Alternatively, we can write
\begin{equation}
    q = \begin{vmatrix}
          - (s + \eps s \, \omega + \eps \nu \, \Delta \theta) &
          \nabla (\bOmega \cdot \grad \theta) \\
          (\Omega + \eps \, \Omega \, \omega - \eps \nu \,
            \partial_z \nabla \theta) &
          \partial_z (\bOmega \cdot \grad \theta) 
    \end{vmatrix} \,.
\end{equation}
We remark that the relation between $q$ and $\theta$ can be seen as a
second order nonlinear differential operator of the form 
\begin{equation}
    q = F (\omega, D^2 \theta) \,,
\end{equation}
which is nonlinearly elliptic \citep[e.g.][]{gilbarg} so long as
\begin{equation}
    [F_{ij}] = \frac{\partial F}{\partial (D^2 \theta)}
\end{equation}
is positive (or negative) definite.  Direct computation shows that
\begin{equation}
  \label{e.pv-matrix}
  [F_{ij}] = 
  \begin{pmatrix}
    \eps \nu \partial_z \rho
    & - \tfrac{1}{2} \eps c \, \xi_1
    & - \tfrac{1}{2} \eps \, ( \xi_1 + \nu \partial_x \rho) \\
    - \tfrac{1}{2} \eps c \, \xi_1
    & \eps \nu \partial_z \rho
      - c( c + \eps \xi_2)
    & - \tfrac{1}{2} \bigl( 2cs + \eps
      ( c \xi_3 + s \xi_2 + \nu \partial_y \rho) \bigr) \\
    - \tfrac{1}{2} \eps \, ( \xi_1 + \nu \partial_x \rho)
    & - \tfrac{1}{2} \bigl( 2cs + \eps
      ( c \xi_3 + s \xi_2 + \nu \partial_y \rho) \bigr)
    & - s^2 - \eps s \, \xi_3
  \end{pmatrix} \,,
\end{equation}
so that $-F$ is positive definite provided its principal minors are
positive, i.e.,
\begin{subequations}
\label{e.ellp-cond}
\begin{gather}
  - \eps \nu \partial_z \rho  > 0 \,,
  \label{e.cond-a} \\
  - \eps \nu \, \partial_z \rho + c \bigl[ c + \eps \bigl( \xi_2 + \tfrac{1}{4} \tfrac{c \xi_1^2}{\nu \partial_z \rho} \bigr) \bigr] > 0 \,,
  \label{e.cond-b} \\
  \det F < 0 \,.
  \label{e.cond-c}
\end{gather}
\end{subequations}
Condition \eqref{e.cond-c} can be written more explicitly as
\begin{equation}
   - (\eps s \nu \partial_z \rho)^2 + \eps^2 \, f(\bxi, \nabla \rho;
   \bxi, \rho) < 0 \,,
\end{equation}
where $f$ is linear in its first two arguments.  Hence, these
conditions are satisfied if the fluid is stably stratified so that
$\partial_z \rho < 0$, the deviations from a steady mean state are
sufficiently small, and $\eps$ is sufficiently small.

\section{Separation of balance relation into dynamic and kinematic
components}
\label{s.bal-comp}

In the following, we show that the balance relation is mostly a
kinematic relationship between density $\rho$ and balanced velocity
field.  However, there is one horizontal scalar field, $\omega$, which
evolves dynamically via the vertical component equation
\begin{equation}
  \label{e.ep-cross-vertical}
  \partial_t \xi_3
  + \bu \cdot \grad \xi_3
  = \bxi \cdot \grad u_3
  + \nabla \rho \cdot \nabla^\bot \nabla^\bot \cdot B
  + \eps^{-1} \, \bOmega \cdot \grad \hat u_3 \,.
\end{equation}
Note that the seemingly unbalanced $O(\eps^{-1})$-term in this
equation is actually only an $O(1)$-contribution because $\bu$ is an
$O(\eps)$ perturbation of $\bu_g$ whose vertical component is zero.

Taking the average of \eqref{e.ep-cross-vertical} along the axis of
rotation and using Lemma~\ref{l.av-commutation-1} and
Lemma~\ref{l.z-commute-2} from the appendix to commute averaging with
directional derivatives where possible, we find that the evolution
equation for $\xi_3$ reduces to a prognostic equation for $\omega$,
\begin{equation}
\label{e.average-vertical}
  \partial_t \omega + \bar \bu \cdot \grad \omega
   = \overline{\nabla \rho \cdot \nabla^\bot \nabla^\bot \cdot B}
  - \nu \, \overline{(\partial_z \nabla \theta \cdot \nabla u_3
    - \Delta \theta \, \partial_z u_3
    + \bu \cdot \grad \Delta \theta)} \,.
\end{equation}
All other contributions to \eqref{e.ep-cross} are entirely kinematic.
To see this, we start with multiplying \eqref{e.average-vertical} by
$\bOmega$, then subtract this expression from \eqref{e.ep-cross} to
obtain
\begin{align}
\label{e.bal-1}
  0 = & - \bOmega \cdot \grad \bu
  + \grad \times (\rho \bk)
  + \eps \, \bigl[
      \nu \, \partial_t \bgamma
      + \bOmega \, \hat \bu \cdot \grad \omega
      + \nu \, \bu \cdot \grad \bgamma
      - \bxi \cdot \grad \bu
    \notag \\
    & + \grad \rho \times \grad \nabla^\bot \cdot B
    + \bOmega \,
      \overline{\nabla \rho \cdot \nabla^\bot \nabla^\bot \cdot B}
      + \nu \, \bOmega \, \overline{(\bgamma \cdot \grad u_3
    - \bu \cdot \grad \Delta \theta)} \bigr] \,.
\end{align}
We now assume that the flow is strongly and uniformly
stratified.  More specifically, we shall assume that
\begin{equation}
  \partial_z \rho = - \alpha + \partial_z \tilde \rho
  \label{e.stratification}
\end{equation}
where $\tilde \rho$ is small in a suitable norm, to be specified
further below.  It is possible to weaken this assumption and allow
variations in the stratification profile so long as stratification is
uniformly stable across the layer, but the simpler assumption
\eqref{e.stratification} makes the structure of the problem more
transparent.

Inserting \eqref{e.stratification} into the definition of $\bgamma$
and noting that $\bOmega \cdot \grad \theta = - \rho$, we find
\begin{equation}
  \bOmega \cdot \grad \dot \gamma_3
  = \Delta (\bOmega \cdot \grad \dot \theta)
  = \Delta (\bu \cdot \grad \rho)
  = - \alpha \, \Delta u_3 + \Delta (\bu \cdot \grad \tilde \rho) \,.
  \label{e.id1}
\end{equation}
Decomposing
\begin{equation}
  U = \nabla^\bot \Psi + \nabla \Phi \,,
  \label{e.Udecomp}
\end{equation}
and, setting $\zeta = \Delta \Psi$, we have
\begin{equation}
  \grad \rho \times \grad \nabla^\bot \cdot B
  = - \alpha \nu \, \bk \times \grad \zeta
    + \nu \, \grad \tilde \rho \times \grad \zeta
    - 2 \lambda \, \grad \rho \times \grad \nabla^\bot \cdot U_g \,.
  \label{e.id2}
\end{equation}
Inserting \eqref{e.id1} and \eqref{e.id2} into \eqref{e.bal-1}, taking
the horizontal curl of the horizontal component equations, and
applying $\bOmega \cdot \grad$ to the vertical component equation, we
obtain a pair of kinematic balance relations:
\begin{subequations}
\label{e.balance}
\begin{align}
\label{e.bal-lhs}
  (\bOmega \cdot \grad)^2 \zeta + \eps \alpha \nu \, \Delta \phi \zeta
  & = - \Delta \tilde \rho + \eps \, f \,, \\
  (\bOmega \cdot \grad)^2 u_3 + \eps \alpha \nu \, \Delta \phi u_3
  & = \eps \, g
\end{align}
with
\begin{align}
  f
  & = \Omega \cdot \nabla^\bot \, (\hat \bu \cdot \grad \omega)
        + \nu \, \nabla^\bot \cdot (\bu \cdot \grad \gamma)
        - \nabla^\bot \cdot (\bxi \cdot \grad u)
        - \nu \, \nabla \cdot
            (\nabla \tilde \rho \, \partial_z \zeta)
    \notag \\
  & \quad
        + \nu \, \nabla \cdot
            (\nabla \zeta \, \partial_z \tilde \rho)
        + 2 \lambda \, \nabla \cdot
            (\nabla \rho \, \partial_z \nabla^\bot \cdot U_g)
        - 2 \lambda \, \nabla \cdot
            (\nabla \nabla^\bot \cdot U_g \, \partial_z \rho)
    \notag \\
  & \quad
      - 2 \lambda \, \Omega \cdot \nabla^\bot \,
        \overline{\nabla \tilde \rho \cdot \nabla^\bot \nabla^\bot \cdot U_g}
        + \nu \, \Omega \cdot \nabla^\bot \,
      \overline{(\nabla \tilde \rho \cdot \nabla^\bot \zeta
        + \bgamma \cdot \grad u_3
        - \hat \bu \cdot \grad \Delta \theta)} \,, \\
  g & = \nu \, \Delta ( \bu \cdot \grad \tilde \rho)
        + s \, \hat \bu \cdot \grad \omega
        + \nu \, \bOmega \cdot \grad (\bu \cdot \grad \gamma_3
          - \bxi \cdot \grad u_3
          - \nabla \tilde \rho \cdot \nabla^\bot \nabla^\bot \cdot B) \,.
\end{align}
\end{subequations}
This elliptic problem is augmented by homogeneous Dirichlet boundary
conditions on both $\zeta$ and $u_3$.  These boundary conditions
encode, for $u_3$, the no-flux conditions of the full velocity field,
and for $\zeta$ that it is the antiderivative along $\bOmega$ of a
mean-free field with an arbitrary choice of gauge that disappears upon
differentiation.

The right hand functions $f$ and $g$ still contain terms that depend
on the unknown functions $\zeta$ and $u_3$, so they need to be solved
by fixed point iteration.  In the next section, we will argue that
this can be done under suitable smallness assumptions for
$\tilde \rho$ and $\omega$.

\section{Closing the balance model}
\label{s.solution}

To close the balance relation and the prognostic equations, we need to
recover the full velocity field $\bu$ from $\omega$, $\zeta$, and
$u_3$.  We express the horizontal component of the velocity in terms
of stream function $\psi$ and velocity potential $\phi$,
\begin{equation}
    u = \nabla^\bot \psi + \nabla \phi \,.
\end{equation}
First, note that
$\nabla^\bot \cdot \sM_h \bar \bu = \nabla^\bot \cdot \bar u - c/s \,
\partial_x \bar u_3 = s \, \omega$ by definition, so that
\begin{subequations}
  \label{e.reconstruction}
\begin{equation}
  \Delta \bar \psi
  = s \, \omega + \tfrac{c}{s} \, \partial_x \bar u_3 \,.
\end{equation}
Next, due to \eqref{e.Udecomp},
\begin{equation}
  \label{e.hat-psi}
  \Delta \Psi = \zeta \,.
\end{equation}
Finally, by incompressibility,
\begin{equation}
  \Delta \phi = - \partial_z u_3 
\end{equation}
so that, altogether,
\begin{equation}
  u = \nabla^\bot \bar \psi
      + \bOmega \cdot \grad \nabla^\bot \Psi
      + \nabla \phi \,.
\end{equation}
\end{subequations}
This expression for the horizontal components of the velocity field,
first, proves that the kinematic balance relation \eqref{e.balance}
can be solved by iteration.  Seeking a solution in the Sobolev space
$H^{s+2}$, $H^s$ denoting the space of square integrable functions
with square-integrable derivatives up to order $s$ where we take $s$
large enough so that products of such functions are also contained in
$H^s$, we need to verify that all terms that appear in $f$ and $g$ are
contained in $H^s$.  For example, for the term
$\nabla^\bot \cdot (\bxi \cdot \grad u)$ which appears in the
expression for $f$, the highest derivatives on the unknown functions
appear as
\begin{equation}
  \bxi \cdot \grad(\partial_x \bar u_3
  + \bOmega \cdot \grad \zeta) \,,
\end{equation}
so this term is contained in $H^s$ provided $\zeta, u_3 \in H^{s+2}$.
All other terms are either similar or have only lower-order
derivatives on the unknowns.  Further, the coefficients that appear,
here $\bxi$, depend only on $\omega$ and $\tilde \rho$, so they are
small if the data is close enough to the stably stratified rest state
in a Sobolev norm of sufficiently high order.  Thus, the balance
relation \eqref{e.balance} defines a contraction in $H^{s+2}$ and can
be solved by iteration.

Second, the reconstructed velocity field $\bu$ is used to propagate
$\omega$ and $\rho$ (or, alternatively, the potential vorticity $q$),
in time.  Then the complete set of balance model equations is given by
the the kinematic balance relation \eqref{e.balance}, the
reconstruction equations \eqref{e.reconstruction}, the continuity
equation in three dimensions,
\begin{subequations}
\begin{gather}
  \partial_t \rho + \bu \cdot \grad \rho = 0 \,,
\end{gather}
and the evolution equation in two spatial dimensions for $\omega$,
\begin{equation}
  \partial_t \omega + \bar \bu \cdot \grad \omega
  = \overline{\nabla \rho \cdot \nabla^\bot \nabla^\bot \cdot B}
    - \nu \, \overline{(\partial_z \nabla \theta \cdot \nabla u_3
    - \Delta \theta \, \partial_z u_3
    + \bu \cdot \grad \Delta \theta)} \,.
\end{equation}
\end{subequations}
where $B$ is defined in \eqref{e.B} and $\theta$ is the geostrophic
stream function which depends on $\rho$ via \eqref{e.Theta}.

\section{Discussion and Conclusion}
\label{s.conclusion}

In this paper, we have achieved a variational model reduction for the
full three-dimensional Euler--Boussinesq equation with a full Coriolis
force.  We have studied a simple setting, namely the $f$-plane
approximation, a layer of fluid of constant depth, and periodic
boundary conditions in the horizontal.  The picture which emerges is
structurally identical to that of variational balance models for the
primitive equations as derived by \cite{Salmon1996} and generalized in
\cite{OliverV:2016:GeneralizedLS}:
\begin{enumerate}[label={\upshape(\roman*)}]
\item There are two prognostic variables of the first-order balance
model, the density $\rho$ (equivalently, a potential temperature) and
a scalar generalized vorticity $\omega$.

\item The generalized vorticity $\omega$ depends only on the two
velocity components that are perpendicular to the axis of rotation,
and it is averaged along the axis of rotation.  Thus, $\omega$ is
independent of the oblique vertical coordinate $\zeta$.  

\item All other components of the velocity field, i.e., all deviations
from the vertical mean as well as the vertical mean of the velocity
component pointing along the axis of rotation, are kinematic.  In
other words, these components are slaved to $\rho$ and $\omega$ via a
balance relation.

\item The balance relation is elliptic if rotation is sufficiently
fast and the prognostic fields are small perturbations of a stably
stratified equilibrium state.

\item The balance model conserves energy and has a materially
conserved potential vorticity.
\end{enumerate}
Thus, we have verified that the variational derivation of balance
models of semigeostrophic type extends all the way to one of the most
general models of geophysical flows.  In particular, the assumption of
hydrostaticity and of the ``traditional approximation'' changes
details, but does not change the structural features of the
semigeostrophic limit.

At the same time, a full Coriolis force causes difficulties that are
not seen in the simpler cases.  Since the axis of rotation is not
aligned with the direction of gravitational force, there are two
distinguished ``vertical'' directions.  This is an obstacle to using a
fully intrinsic geometric formulation of the derivation in the spirit
of \cite{arnold1999topological} or \cite{GilbertV:2018:GeometricGL},
forcing us to resort to detailed coordinate calculations.  The
resulting equations, therefore, appear to lack the relative simplicity
of balance models in more idealized settings; many of the new terms
simplify or disappear when the Coriolis force is acting exactly in the
horizontal plane.

We emphasize that our derivation requires a nontrivial change of
coordinates already at $O(\eps)$.  The associated transformation
vector field depends on the prognostic part of the mean velocity, cf.\
the last term in \eqref{e.V}.  We believe that it is not possible to
remove this contribution to the transformation at $O(\eps)$ since
there is no leading-order constraint through the thermal wind relation
on this component.  For the analogous computation for the primitive
equations, it suffices to set $\lambda = \tfrac12$ in
\eqref{e.Pv-hat}, which formally cancels all terms at $O(\eps)$.  For
the Euler--Boussinesq system, it is the last term in \eqref{e.V} that
cannot be canceled.  This additional contribution to the
transformation vector field appears even in the case when the axis of
rotation is aligned with the geometric vertical and only disappears
when the hydrostatic approximation is made.  Thus, the more
straightforward derivation by \cite{Salmon1996}, who inserts the
thermal wind relation directly into the extended Lagrangian to
constrain the variational principle does not work in this case; the
more general setting described in Section~\ref{s.v-asymptotics} must
be used.

When the axis of rotation is aligned with the horizontal, i.e., for
the equatorial $f$-plane, all our final expressions remain
non-singular, which is surprising given that some of the intermediate
expressions do contain diverging terms.  What fails, however, is
ellipticity of the balance relation.  Stratification is providing
regularization in the horizontal plane, rotation is providing
regularization in the vertical.  When the axis of rotation is tilted
into the horizontal, all control in the geometric vertical is lost.
We do not expect that the model remains well posed in this limit.

The balance relation allows, under the conditions stated, stable
reconstruction of the slaved components of the velocity field from
sufficiently smooth prognostic variables.  A full analysis of
well-posedness of the balance model is more difficult, as the right
hand side of the balance relation contains high-order derivatives of
the prognostic variables, and remains open.  In practical terms, the
balance relation might be most useful as a diagnostic relation
independent of the full system of prognostic relations.  Nonetheless,
the full balance model can be solved numerically in the formulation
given in Section~\ref{s.solution}.  For balance models in two
dimensions, potential vorticity based methods provide an alternative,
numerically robust setting \citep{DritschelGO:2016:ComparisonVB}.  As
for primitive equation balance models, potential vorticity based
numerics require the solution of a nonlinear elliptic equation
\citep[cf.][]{OliverV:2016:GeneralizedLS,AkramovO:2020:ExistenceSB}.
This alternative formulation would require the solution of one more
Monge--Amp\`ere-like equation, here with oblique derivatives, but may
be more stable because the potential vorticity is materially advected.

\section*{Acknowledgment}

This paper is a contribution to project M2 (Systematic Multi-Scale
Analysis and Modeling for Geophysical Flow) of the Collaborative
Research Center TRR 181 ``Energy Transfers in Atmosphere and Ocean''
funded by the Deutsche Forschungsgemeinschaft (DFG, German Research
Foundation) under project number 274762653.  Additional funding was
received through the Ideen- und Risikofund 2020 at Universität
Hamburg.

\section*{Data Availability}
Data sharing is not applicable to this article as no new data were created or analyzed in this study.

\section*{Appendix A. Averaging along the axis of rotation}
\label{a.averaging}
\renewcommand{\theequation}{A.\arabic{equation}}
\setcounter{equation}{0}

This appendix states two simple lemmas on the properties of the
oblique averaging operation.  The first shows that it distributes over
products as expected.

\begin{lemma} \label{l.av-commutation-1}
Let $\phi$ and $\psi$ be arbitrary functions and $\bv$ an arbitrary
vector field on $\cD$.  Then
\begin{enumerate}[label={\upshape(\roman*)}]
\item\label{0.av-commutation}
$\overline{\psi \, \bar \phi} = \bar \psi \, \bar \phi$,
\item\label{i.av-commutation}
$\overline{\bv \cdot \grad \bar \phi} = \bar \bv \cdot \grad \bar
\phi$.
\end{enumerate}
\end{lemma}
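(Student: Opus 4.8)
The plan is to carry out everything in the oblique coordinates of Section~\ref{s.mapping-decomp}, where the average \eqref{e.u_mean} is just an integral in $\zeta$, so that $\bar\phi\circ\Chi$ is manifestly independent of $\zeta$. This single observation, together with the fact that $\Chi$ is a bijection (recall $\det\sA=s\neq0$), drives both parts: to prove an identity $\overline{A}=B$ it suffices to check $\overline{A}\circ\Chi=B\circ\Chi$, and the latter follows by pulling $\zeta$-independent factors out of the $\zeta$-integral defining the average.

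For part \ref{0.av-commutation} I would compute directly
\begin{equation*}
  \overline{\psi\,\bar\phi}\circ\Chi
  = s\int_{-s^{-1}}^0 (\psi\circ\Chi)\,(\bar\phi\circ\Chi)\,d\zeta
  = (\bar\phi\circ\Chi)\;s\int_{-s^{-1}}^0 \psi\circ\Chi\,d\zeta
  = (\bar\psi\,\bar\phi)\circ\Chi \,,
\end{equation*}
where extracting $\bar\phi\circ\Chi$ from the integral is legitimate precisely because it does not depend on $\zeta$. Composing with $\Chi^{-1}$ gives $\overline{\psi\,\bar\phi}=\bar\psi\,\bar\phi$.

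For part \ref{i.av-commutation} the extra ingredient is that $(\grad\bar\phi)\circ\Chi$, not merely $\bar\phi\circ\Chi$, is independent of $\zeta$. The horizontal components are immediate from \eqref{e.CoV-comp}: $(\partial_x\bar\phi)\circ\Chi=\partial_x(\bar\phi\circ\Chi)$, likewise for $\partial_y$, and $\bar\phi\circ\Chi$ is a function of the horizontal labels alone. For the vertical component I would use \eqref{e.CoV-comp} in the form $\partial_\zeta(\bar\phi\circ\Chi)=(\bOmega\cdot\grad\bar\phi)\circ\Chi$; the left side vanishes, hence $\bOmega\cdot\grad\bar\phi=0$ on $\cD$, and since $\bOmega$ has constant components this differentiates to $(\bOmega\cdot\grad(\partial_j\bar\phi))\circ\Chi=\partial_\zeta\bigl((\partial_j\bar\phi)\circ\Chi\bigr)=0$ for each $j$, so $(\grad\bar\phi)\circ\Chi$ is $\zeta$-independent. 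Then, writing $\bar\bv$ for the component-wise average of $\bv$,
\begin{equation*}
  \overline{\bv\cdot\grad\bar\phi}\circ\Chi
  = s\int_{-s^{-1}}^0 (\bv\circ\Chi)\cdot\bigl((\grad\bar\phi)\circ\Chi\bigr)\,d\zeta
  = \Bigl(s\int_{-s^{-1}}^0 \bv\circ\Chi\,d\zeta\Bigr)\cdot\bigl((\grad\bar\phi)\circ\Chi\bigr)
  = (\bar\bv\cdot\grad\bar\phi)\circ\Chi \,,
\end{equation*}
and bijectivity of $\Chi$ finishes the proof.

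I do not expect a genuine obstacle here: the argument is essentially Fubini together with the statement that an oblique average is constant along characteristics. The only point that requires a moment's care is the vertical-derivative identity in part \ref{i.av-commutation}, where one must invoke the change-of-variables rule \eqref{e.CoV-comp} rather than differentiating $\bar\phi$ naively in $z$; everything else is routine bookkeeping of compositions with $\Chi$.
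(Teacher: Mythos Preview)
Your proof is correct and follows essentially the same line as the paper's. Part~\ref{0.av-commutation} is verbatim the paper's argument. For part~\ref{i.av-commutation} the paper uses $\bOmega\cdot\grad\bar\phi=0$ to rewrite $\partial_z\bar\phi$ as a multiple of $\partial_y\bar\phi$, thereby reducing $\bv\cdot\grad\bar\phi$ to a combination of horizontal derivatives only, and then invokes part~\ref{0.av-commutation}; your route---showing directly that all three components of $(\grad\bar\phi)\circ\Chi$ are $\zeta$-independent and pulling the whole gradient out of the integral---is a minor reorganization of the same idea and is equally valid.
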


\begin{proof}
For \ref{0.av-commutation}, note that $\bar \phi \circ \Chi$ is
independent of $\zeta$, so that
\begin{equation}
  \overline{\psi \, \bar \phi} \circ \Chi
  = s \int_{-s^{-1}}^0 \psi \circ \Chi \,
        \bar \phi \circ \Chi \, d \zeta
      \notag \\
  = s \int_{-s^{-1}}^0 \psi \circ \Chi \, d \zeta \,
        \bar \phi \circ \Chi
  = \bar \psi \circ \Chi \, \bar \phi \circ \Chi \,.
\end{equation}
For \ref{i.av-commutation}, note that
$\bOmega \cdot \grad \bar \phi=0$, so that the $z$-derivative can be
replaced by an equivalent $y$-derivative.  Since horizontal
derivatives commute with taking the average, part
\ref{0.av-commutation} applies and yields the claim.
\end{proof}

Commutation of vertical derivatives of arbitrary functions with
averaging is more subtle, as the next lemma shows.  Here and in the
following, we write $\Chi(0)$ and $\Chi(-s^{-1})$ to indicate that the
expression is evaluated at the top ($\zeta = 0$) or at the bottom
boundary ($\zeta=s^{-1}$), as a function of the remaining horizontal
variables.

\begin{lemma}
\label{l.z-commute-2}
Let $\phi$ be a function with
$\phi \circ \Chi(0) = \phi \circ \Chi(-s^{-1})$ and $\bv$ an arbitrary
vector field on $\cD$.  Then
\begin{enumerate}[label={\upshape(\roman*)}]
\item\label{i.z-commute-2} $\partial_z \bar \phi =
\overline{\partial_z \phi}$,
\item\label{ii.av-commutation}
$\overline{\bar\bv \cdot \grad \phi} = \bar \bv \cdot \grad \bar
\phi$.
\end{enumerate}
\end{lemma}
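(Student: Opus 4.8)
The plan is to establish part \ref{i.z-commute-2} directly from the change-of-variables identity \eqref{e.CoV-comp} and the definition of the oblique average \eqref{e.u_mean}, and then to obtain part \ref{ii.av-commutation} as a short corollary. The essential point is that \eqref{e.CoV-comp} lets us trade the Cartesian vertical derivative for an arclength derivative along $\bOmega$ plus a purely horizontal derivative: since $\partial_\zeta(\phi\circ\Chi) = (\bOmega\cdot\grad\phi)\circ\Chi = (c\,\partial_y\phi + s\,\partial_z\phi)\circ\Chi = c\,\partial_y(\phi\circ\Chi) + s\,(\partial_z\phi)\circ\Chi$, we have $(\partial_z\phi)\circ\Chi = s^{-1}\bigl(\partial_\zeta(\phi\circ\Chi) - c\,\partial_y(\phi\circ\Chi)\bigr)$. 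I would then insert this into $\overline{\partial_z\phi}\circ\Chi = s\int_{-s^{-1}}^0(\partial_z\phi)\circ\Chi\,d\zeta$. The $\partial_\zeta$-term integrates to the boundary difference $\phi\circ\Chi(0) - \phi\circ\Chi(-s^{-1})$, which vanishes precisely by the hypothesis; the $\partial_y$-term may be pulled out of the $\zeta$-integral (the integration limits do not depend on $y$) and yields $-c\,s^{-1}\,\partial_y(\bar\phi\circ\Chi) = -c\,s^{-1}\,(\partial_y\bar\phi)\circ\Chi$. Finally, since $\bar\phi\circ\Chi$ is $\zeta$-independent we have $\bOmega\cdot\grad\bar\phi = 0$, i.e. $\partial_z\bar\phi = -c\,s^{-1}\,\partial_y\bar\phi$, so the right-hand side is exactly $(\partial_z\bar\phi)\circ\Chi$, which proves \ref{i.z-commute-2}.

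For part \ref{ii.av-commutation}, I would first record the companion fact that horizontal derivatives commute with averaging, $\overline{\partial_x\phi} = \partial_x\bar\phi$ and $\overline{\partial_y\phi} = \partial_y\bar\phi$, which is immediate from \eqref{e.u_mean} and \eqref{e.CoV-comp} because $\partial_x$ and $\partial_y$ pass through the $\zeta$-integral. Then I would expand $\bar\bv\cdot\grad\phi = \sum_i \bar v_i\,\partial_i\phi$ and apply Lemma~\ref{l.av-commutation-1}\ref{0.av-commutation} componentwise to pull each $\bar v_i$ outside the average, giving $\overline{\bar\bv\cdot\grad\phi} = \sum_i \bar v_i\,\overline{\partial_i\phi}$. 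Replacing $\overline{\partial_x\phi},\overline{\partial_y\phi}$ by $\partial_x\bar\phi,\partial_y\bar\phi$ and $\overline{\partial_z\phi}$ by $\partial_z\bar\phi$ via part \ref{i.z-commute-2}, and reassembling the three terms, yields $\bar\bv\cdot\grad\bar\phi$.

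I expect the only real obstacle to be the bookkeeping in part \ref{i.z-commute-2}: one must apply \eqref{e.CoV-comp} in the correct direction—solving for $(\partial_z\phi)\circ\Chi$ rather than computing $\partial_z(\phi\circ\Chi)$—and keep the factors of $s$ (one from the definition of the average, one from $\det\sA = s$) straight, and above all one must identify the surface term $\phi\circ\Chi(0) - \phi\circ\Chi(-s^{-1})$ and see that it is exactly what the hypothesis kills. This surface term is the whole content of the lemma: it is the obstruction to commuting $\partial_z$ with averaging in general, in contrast to horizontal derivatives, which always commute. Once part \ref{i.z-commute-2} is in place, part \ref{ii.av-commutation} is routine.
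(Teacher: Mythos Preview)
Your proposal is correct and follows essentially the same route as the paper: both proofs solve \eqref{e.CoV-comp} for $(\partial_z\phi)\circ\Chi$, integrate in $\zeta$ so that the $\partial_\zeta$-piece becomes the boundary difference killed by the hypothesis while the $\partial_y$-piece yields $-cs^{-1}\,\partial_y\bar\phi$, and then identify this with $\partial_z\bar\phi$ via $\bOmega\cdot\grad\bar\phi=0$; part~\ref{ii.av-commutation} is handled in both by pulling the components of $\bar\bv$ out of the average via Lemma~\ref{l.av-commutation-1}\ref{0.av-commutation} and then commuting each $\partial_i$ with the bar.
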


\begin{proof}
On the one hand, $\bOmega \cdot \grad \bar \phi = 0$, so that
$s \, \partial_z \bar \phi = - c \, \partial_y \bar \phi$.  On the
other hand,
\begin{align}
  \overline{\partial_z \phi} \circ \Chi
  & = s \int_{-s^{-1}}^0 (\partial_z \phi) \circ \Chi \, d \zeta
      \notag \\
  & = \int_{-s^{-1}}^0 (\partial_\zeta - c \, \partial_y)
        (\phi \circ \Chi) \, d \zeta
      \notag \\
  & = - \frac{c}s \, \partial_y \bar \phi \circ \Chi
      + \phi \circ \Chi (0) - \phi \circ \Chi (-s^{-1}) \,.
  \label{e.proof.z-commute-2}
\end{align}
Under the condition stated, the boundary terms cancel.  This implies
\ref{i.z-commute-2}.  For \ref{ii.av-commutation}, we use
Lemma~\ref{l.av-commutation-1} to move $\bar \bv$ out of the average.
Horizontal derivatives can be moved out of the average without
restrictions; for the $z$-derivative, part \ref{i.z-commute-2}
applies.
\end{proof}

\section*{Appendix B. Splitting of divergence free vector fields}
\label{a.divergence}
\renewcommand{\theequation}{B.\arabic{equation}}
\setcounter{equation}{0}

We proceed to prove a number of identities which describe the
splitting of divergence-free vector fields with zero-flux boundary
conditions into, on the one hand, mean and mean-free components and,
on the other hand, components along and perpendicular to the axis of
rotation.

\begin{lemma} \label{l.mean-divergence1}
Let $\bv \in \Vdiv$.  Then $\nabla \cdot \sS_h \sP \bar \bv = 0$.
\end{lemma}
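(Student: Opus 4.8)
The plan is to reduce the assertion, which involves only $\bar\bv$, to the three-dimensional incompressibility of $\bv$ by averaging along the axis of rotation. First I would write out the left-hand side: with $\sS_h = \operatorname{diag}(1,s^{-2})$ the horizontal block of $\sS$ from \eqref{e.S-def} acting on the horizontal part of $\sP\bar\bv$, a direct computation from \eqref{e.Pdef} gives $\sS_h\sP\bar\bv = (\bar v_1,\ \bar v_2 - (c/s)\,\bar v_3)$, hence $\nabla\cdot\sS_h\sP\bar\bv = \partial_x\bar v_1 + \partial_y\bar v_2 - (c/s)\,\partial_y\bar v_3$, where $\nabla\cdot$ is the horizontal divergence.

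Next I would average the constraint $\grad\cdot\bv = \partial_x v_1 + \partial_y v_2 + \partial_z v_3 = 0$ along the axis of rotation. The horizontal derivatives commute with the oblique average by \eqref{e.CoV-comp} (cases $i=x,y$), so $\overline{\partial_x v_1} = \partial_x\bar v_1$ and $\overline{\partial_y v_2} = \partial_y\bar v_2$. For the vertical term I would apply Lemma~\ref{l.z-commute-2}\ref{i.z-commute-2}, whose hypothesis $v_3\circ\Chi(0) = v_3\circ\Chi(-s^{-1})$ holds here because $z = s\zeta$ along the mapping \eqref{e.mapping} identifies $\zeta=0$ with $z=0$ and $\zeta=-s^{-1}$ with $z=-1$, where the zero-flux condition $\bk\cdot\bv=0$ built into $\Vdiv$ forces $v_3=0$. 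This yields $\overline{\partial_z v_3} = \partial_z\bar v_3$, and since $\bar v_3$ is an oblique average we have $\bOmega\cdot\grad\bar v_3 = 0$, that is $\partial_z\bar v_3 = -(c/s)\,\partial_y\bar v_3$. (Alternatively, the boundary terms in the computation underlying Lemma~\ref{l.z-commute-2} cancel and give $\overline{\partial_z v_3} = -(c/s)\,\partial_y\bar v_3$ in one step.)

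Combining, the averaged constraint reads $\partial_x\bar v_1 + \partial_y\bar v_2 + \partial_z\bar v_3 = \partial_x\bar v_1 + \partial_y\bar v_2 - (c/s)\,\partial_y\bar v_3 = 0$, and the last expression is exactly $\nabla\cdot\sS_h\sP\bar\bv$ as computed above, proving the claim. I do not expect a real obstacle; the one point requiring care is the passage of $\partial_z$ through the average, which is precisely where the defining boundary conditions of $\Vdiv$ enter, via Lemma~\ref{l.z-commute-2}. Everything else is the routine matrix identity for $\sS_h\sP$.
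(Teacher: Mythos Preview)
Your argument is correct. It differs from the paper's route in organization rather than in substance. The paper works entirely in the oblique coordinates: it first establishes the \emph{pointwise} identity
\[
  \nabla \cdot (\sS_h \sP \bv \circ \Chi)
  = - s^{-1}\,\partial_\zeta (v_3 \circ \Chi)
\]
for any divergence-free $\bv$, via $\grad\cdot\sP\bv = -\bOmega\cdot\grad(\bOmega\cdot\bv)$ together with \eqref{e.div-commute-2}, and only then integrates in $\zeta$, the boundary conditions killing the right-hand side. You instead stay in Cartesian coordinates, average the three-dimensional divergence constraint first, and invoke Lemma~\ref{l.z-commute-2} to pass $\partial_z$ through the bar. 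The boundary conditions enter at exactly the same place (the vanishing of $v_3$ at top and bottom), just packaged inside the commutation lemma rather than as an explicit $\zeta$-integration. Your approach is a bit more elementary since it avoids the coordinate-change identity \eqref{e.div-commute-2}; the paper's approach yields the slightly stronger intermediate pointwise identity above, which it does not otherwise use.
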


\begin{proof}
For a divergence-free vector field,
$\grad \cdot \sP \bv = - \grad \cdot \sQ \bv = - \bOmega \cdot \grad
(\bOmega \cdot \bv)$, so that \eqref{e.div-commute-2} turns into
\begin{equation}
  \nabla \cdot (\sS_h \sP \bv \circ \Chi)
  = - \partial_\zeta (\bOmega \cdot \bv \circ \Chi)
    - \partial_\zeta (\sS_3 \sP \bv \circ \Chi)
  = - s^{-1} \, \partial_\zeta (v_3 \circ \Chi) \,,
\end{equation}
where the last equality can be verified by direct computation in
coordinates.  Integrating in $\zeta$ and noting that the right hand
side is zero due to the boundary conditions, we obtain the statement
of the lemma.
\end{proof}

The following is a somewhat weaker converse of
Lemma~\ref{l.mean-divergence1}. 

\begin{lemma} \label{l.mean-divergence2}
Let $\bv$ be a vector field with $\nabla \cdot \sS_h \sP \bar \bv =
0$.  Then $\grad \cdot \bar \bv = 0$.
\end{lemma}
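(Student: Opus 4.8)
The plan is to reverse the computation in the proof of Lemma~\ref{l.mean-divergence1}, exploiting the fact that the obstruction to $\grad \cdot \bar \bv = 0$ is entirely captured by the vertical component $\bar v_3$ through the identity relating $\sS_h \sP$-divergence and $\partial_\zeta v_3$. First I would recall from \eqref{e.div-commute}, applied to the mean field $\bar \bv$, that
\begin{equation}
  \grad \cdot (\bar \bv \circ \Chi) = (\grad \cdot \sA \bar \bv) \circ \Chi \,,
\end{equation}
and separately decompose $\bar \bv = \sP \bar \bv + \sQ \bar \bv$, so that $\grad \cdot \bar \bv = \grad \cdot \sP \bar \bv + \bOmega \cdot \grad (\bOmega \cdot \bar \bv)$. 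Since $\bOmega \cdot \grad$ of any mean quantity vanishes (that is how $\bar{\phantom{\phi}}$ is constructed), the second term is zero, so it suffices to show $\grad \cdot \sP \bar \bv = 0$.

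Next I would relate $\grad \cdot \sP \bar \bv$ to the hypothesis. From \eqref{e.div-commute-2}, $(\grad \cdot \sP \bar \bv) \circ \Chi = \grad \cdot (\sS \sP \bar \bv \circ \Chi)$, and splitting $\sS \sP = \sS_h \sP + \sS_3 \sP$ into horizontal and vertical rows gives
\begin{equation}
  (\grad \cdot \sP \bar \bv) \circ \Chi
  = \nabla \cdot (\sS_h \sP \bar \bv \circ \Chi)
    + \partial_\zeta (\sS_3 \sP \bar \bv \circ \Chi) \,.
\end{equation}
By hypothesis the first term on the right vanishes. For the second term, I would note that $\sS_3 \sP \bar \bv$ is, up to the scalar factor $s^{-1}$ appearing in $\sS$, a linear combination of $\bar v_2$ and $\bar v_3$; but $\bar \bv \circ \Chi$ is independent of $\zeta$ by definition of the mean, so $\partial_\zeta$ of it is zero. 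Hence $(\grad \cdot \sP \bar \bv) \circ \Chi = 0$, and composing with $\Chi^{-1}$ gives $\grad \cdot \sP \bar \bv = 0$, completing the proof.

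The main subtlety — the ``weaker'' in the statement — is that the hypothesis only controls the \emph{horizontal} part of the $\sS\sP$-divergence of $\bar\bv$, not the vertical part; the lemma works only because the vertical part automatically vanishes for \emph{mean} fields (which are $\zeta$-independent), whereas for general $\bv$ one would additionally need the zero-flux boundary condition, as in Lemma~\ref{l.mean-divergence1}. So the one place to be careful is to invoke $\zeta$-independence of $\bar \bv \circ \Chi$ at the right moment rather than a boundary condition; no boundary condition on $\bv$ is assumed here, which is exactly why the converse is only partial. The remaining steps are routine matrix bookkeeping with $\sS$, $\sP$, $\sA$ and the change-of-variables formulas \eqref{e.div-commute}--\eqref{e.div-commute-2}.
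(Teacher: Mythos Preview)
Your argument is correct. Both your proof and the paper's rely on the same two ingredients---the hypothesis controls the horizontal part of the oblique-coordinate divergence, and $\zeta$-independence of $\bar\bv\circ\Chi$ kills the remaining vertical part---but they are organized differently. The paper writes the hypothesis out explicitly as $\nabla\cdot\bar v - (c/s)\,\partial_y\bar v_3 = 0$, observes that together with $\partial_\zeta(\bar v_3\circ\Chi)=0$ this is exactly $\grad\cdot(\sA^{-1}\bar\bv\circ\Chi)=0$, and then invokes \eqref{e.div-commute} with $\bv\mapsto\sA^{-1}\bar\bv$. You instead split $\bar\bv=\sP\bar\bv+\sQ\bar\bv$, dispatch the $\sQ$-part immediately via $\bOmega\cdot\grad\bar\phi=0$, and handle the $\sP$-part through \eqref{e.div-commute-2}. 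Your route is a bit more structural and avoids writing anything in components; the paper's is shorter and more explicit. Either way the content is the same, and your closing remark correctly identifies why no boundary condition is needed here.
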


\begin{proof}
The assumption implies
\begin{equation}
  0 = \nabla \cdot \bar v
      - \frac{c}s \, \partial_y \bar v_3 \,.
  \label{e.assumption-coords}
\end{equation}
Since $\bar v_3 \circ \Chi$ is independent of $\zeta$, this implies
$\grad \cdot (\sA^{-1} \bar \bv \circ \Chi) =0$.  By
\eqref{e.div-commute}, this implies that $\grad \cdot \bar \bv = 0$.
\end{proof}

\begin{corollary}
If $\bv \in \Vdiv$, then $\grad \cdot \hat \bv = \grad \cdot \bar \bv
= 0$.
\end{corollary}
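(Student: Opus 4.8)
The plan is to obtain this corollary as an immediate consequence of Lemmas~\ref{l.mean-divergence1} and~\ref{l.mean-divergence2}, together with linearity of the divergence operator. First I would take an arbitrary $\bv \in \Vdiv$ and invoke Lemma~\ref{l.mean-divergence1}, which gives $\nabla \cdot \sS_h \sP \bar \bv = 0$. This identity is precisely the hypothesis required by Lemma~\ref{l.mean-divergence2}, so a direct application of that lemma yields $\grad \cdot \bar \bv = 0$.

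For the mean-free component, I would then write $\hat \bv = \bv - \bar \bv$ and use that $\grad \cdot \bv = 0$ by the definition of $\Vdiv$. Linearity of the divergence then gives $\grad \cdot \hat \bv = \grad \cdot \bv - \grad \cdot \bar \bv = 0$, which completes the argument. There is no substantive obstacle here, since the entire content sits in the two preceding lemmas; the only point worth flagging is that Lemma~\ref{l.mean-divergence2} is stated for arbitrary vector fields and needs no boundary conditions, so the chaining is legitimate for every element of $\Vdiv$ and relies on nothing beyond the incompressibility and zero-flux properties already built into $\Vdiv$.
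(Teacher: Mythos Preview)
Your proposal is correct and follows essentially the same route as the paper: chain Lemma~\ref{l.mean-divergence1} into Lemma~\ref{l.mean-divergence2} to obtain $\grad \cdot \bar \bv = 0$, then subtract to conclude $\grad \cdot \hat \bv = 0$. The paper's proof is just a terser statement of exactly this argument.
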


\begin{proof}
Lemma~\ref{l.mean-divergence1} followed by
Lemma~\ref{l.mean-divergence2} yields $\grad \cdot \bar \bv=0$.  Then
$\hat \bu = \bu - \bar \bu$ is also divergence-free.
\end{proof}

\begin{lemma}
\label{l.constraint}
Let $\bu \in \Vdiv$.  Then $\sQ \hat \bu$ is uniquely determined by
$\sP \hat \bu$ and given by the formula
$\sQ \hat \bu = \bOmega \hat g$ with
\begin{gather}
  \hat g \circ \Chi 
  = - s \int_{-s^{-1}}^0 \zeta \, \grad \cdot
        (\sS \sP \hat \bu \circ \Chi) \, d \zeta
    - \int_{-s^{-1}}^\zeta \grad \cdot
        (\sS \sP \hat \bu \circ \Chi') \, d\zeta'  \,.
  \label{e.g}     
\end{gather}
\end{lemma}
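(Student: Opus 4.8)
The plan is to identify $\hat g$ explicitly with $\bOmega\cdot\hat\bu$ and then read off the closed form \eqref{e.g} from the divergence constraint together with the vanishing-mean property of $\hat\bu$.

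First I would note that, since $\bOmega$ is a unit vector, $\sQ=\bOmega\bOmega^\sT$ is the orthogonal projector onto $\operatorname{span}(\bOmega)$, so necessarily $\sQ\hat\bu=\bOmega\,\hat g$ with $\hat g:=\bOmega\cdot\hat\bu$. This $\hat g$ is genuinely mean-free in the sense of \eqref{e.u_mean}: one has $\overline{\bOmega\cdot\bu}=\bOmega\cdot\bar\bu$ by linearity, and $\bOmega\cdot\bar\bu\circ\Chi$ is $\zeta$-independent, so $\overline{\bOmega\cdot\hat\bu}=0$. Hence the lemma amounts to the identity \eqref{e.g} for $\bOmega\cdot\hat\bu$; the uniqueness statement is then automatic, because the right-hand side of \eqref{e.g} involves $\hat\bu$ only through $\sS\sP\hat\bu$, i.e.\ only through $\sP\hat\bu$.

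Next I would convert $\grad\cdot\bu=0$ into an ODE along the characteristics. Decomposing $\bu=\sP\bu+\sQ\bu$ and using that $\bOmega$ is constant gives $\grad\cdot\sP\bu=-\bOmega\cdot\grad(\bOmega\cdot\bu)$; composing with $\Chi$ and applying \eqref{e.CoV-comp} (for $i=\zeta$) together with \eqref{e.div-commute-2} yields
\[
  \partial_\zeta\bigl((\bOmega\cdot\bu)\circ\Chi\bigr)
  = -\,\grad\cdot(\sS\sP\bu\circ\Chi)\,.
\]
Splitting $\bu=\hat\bu+\bar\bu$ here, the mean part contributes nothing on the left because $\bar\bu\circ\Chi$ is $\zeta$-independent, and nothing on the right because the $\zeta$-component of $\sS\sP\bar\bu\circ\Chi$ is $\zeta$-independent while its horizontal divergence vanishes by Lemma~\ref{l.mean-divergence1} applied to $\bu\in\Vdiv$. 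Thus $\partial_\zeta(\hat g\circ\Chi)=-\grad\cdot(\sS\sP\hat\bu\circ\Chi)$, which integrates to
\[
  \hat g\circ\Chi
  = \hat g\circ\Chi(-s^{-1})
    - \int_{-s^{-1}}^{\zeta}\grad\cdot(\sS\sP\hat\bu\circ\Chi')\,d\zeta'\,.
\]

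Finally I would fix the constant $\hat g\circ\Chi(-s^{-1})$ from $s\int_{-s^{-1}}^{0}\hat g\circ\Chi\,d\zeta=0$: substituting the last display and interchanging the order of the two integrals — equivalently, one integration by parts in $\zeta$, using that the inner antiderivative vanishes at $\zeta=-s^{-1}$ — turns $s\int_{-s^{-1}}^{0}\int_{-s^{-1}}^{\zeta}(\cdot)\,d\zeta'\,d\zeta$ into $-\,s\int_{-s^{-1}}^{0}\zeta\,(\cdot)\,d\zeta$, so that $\hat g\circ\Chi(-s^{-1})=-s\int_{-s^{-1}}^{0}\zeta\,\grad\cdot(\sS\sP\hat\bu\circ\Chi)\,d\zeta$. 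Combining the two displays gives exactly \eqref{e.g}. Nothing here is deep; the one point that needs care is bookkeeping — keeping straight which ``$\grad\cdot$'' acts in $\bx$- versus $\bxi$-coordinates — and recognizing, via Lemma~\ref{l.mean-divergence1}, that the oblique-mean part of the flux $\sS\sP\bu$ is divergence-free, which is what lets the mean terms drop out so cleanly.
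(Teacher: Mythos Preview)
Your proof is correct and follows essentially the same route as the paper's: both derive the ODE $\partial_\zeta(\hat g\circ\Chi)=-\grad\cdot(\sS\sP\hat\bu\circ\Chi)$ from the divergence constraint, invoke Lemma~\ref{l.mean-divergence1} to drop the $\bar\bu$-contribution, and then fix the constant of integration from the mean-free condition via the same integration-by-parts identity. The only cosmetic difference is that you identify $\hat g=\bOmega\cdot\hat\bu$ at the outset, whereas the paper introduces $g=\bOmega\cdot\bu$ and splits off $\bar g$ as the constant of integration afterwards.
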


\begin{proof}
With $\bu = \sP \bu + \sQ \bu$, the divergence condition reads
\begin{align}
\label{eqn:q}
  0 & = \grad \cdot\bu = \grad \cdot \sP \bu + \grad \cdot \sQ \bu \,.
\end{align}
Recalling that $\sQ = \bOmega \bOmega^\sT$ and setting
$g= \bOmega \cdot \bu$, we can write $\sQ \bu = \bOmega g$.  Then,
\begin{equation}
  \bOmega \cdot \grad \hat g
  = \bOmega \cdot \grad g
  = - \grad \cdot \sP \bu 
\end{equation}
so that, due to \eqref{e.CoV-comp},
\begin{equation}
  \partial_\zeta (\hat g \circ \Chi)
  = (\bOmega \cdot \grad \hat g) \circ \Chi
  = - (\grad \cdot \sP\bu) \circ \Chi
  = - \grad \cdot (\sS \sP \bu \circ \Chi)
  = - \grad \cdot (\sS \sP \hat \bu \circ \Chi) \,.
  \label{e.dzg}
\end{equation}
The third equality in \eqref{e.dzg} is due to \eqref{e.div-commute-2}
and the last equality is due to $\bu = \hat \bu + \bar \bu$, where
$\bar \bu \circ \Chi$ is independent of $\zeta$, so that the entire
contribution from $\bar \bu$ vanishes by
Lemma~\ref{l.mean-divergence1}.  Equation \eqref{e.dzg} determines $g$
uniquely up to a constant of integration on each of the characteristic
lines; we write $g = \hat g + \bar g$ and choose $\bar g$ as this
constant of integration.  The condition that $\hat g$ is mean-free
along each characteristic line implies
\begin{gather}
\label{e.meanFree}
  0 = s \int_{-s^{-1}}^0 \hat g \circ \Chi \, d \zeta
    = \hat g \circ \Chi (-s^{-1}) - s \int_{-s^{-1}}^0 \zeta \,
        \partial_\zeta (\hat g \circ \Chi) \, d \zeta \,.
\end{gather}
Then, substituting \eqref{e.dzg} and \eqref{e.meanFree} into
\begin{gather}
\label{e.meanFree-g}
  \hat g \circ \Chi (\zeta)
  = \hat g \circ \Chi (-s^{-1})
    + \int_{-s^{-1}}^\zeta
      \partial_{\zeta} (\hat g \circ \Chi') \, d \zeta' \,,
\end{gather}
we obtain \eqref{e.g}, which determines $\sQ \hat \bu$ uniquely.
\end{proof}

The next lemma provides a converse statement to
Lemma~\ref{l.constraint}.

\begin{lemma}
\label{l.div-uhat}
Suppose $\sP \hat \bu$ is a given vector field which is mean-free and
contained in the range of $\sP$.  Define $\hat g$ as in
Lemma~\ref{l.constraint}, i.e.,
\begin{equation}
  \label{e.g1}
  \hat g \circ \Chi
  = - s \int_{-s^{-1}}^0 \zeta \, \grad \cdot
      (\sS \sP \hat \bu \circ \Chi) \, d\zeta
    - \int_{-s^{-1}}^\zeta \grad \cdot
      (\sS \sP \hat \bu \circ \Chi') \, d\zeta' \,.
\end{equation}
Then $\hat \bu = \sP \hat \bu + \sQ \hat \bu$ with
$\sQ \hat \bu = \bOmega \hat g$ is mean-free and divergence-free.
\end{lemma}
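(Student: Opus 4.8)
The plan is to verify the two assertions — that $\hat\bu$ is divergence-free and that it is mean-free — by direct computation, essentially running the argument of Lemma~\ref{l.constraint} in reverse.

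For divergence-freeness, I would differentiate the defining formula \eqref{e.g1} in $\zeta$. The first term on the right of \eqref{e.g1} is independent of $\zeta$, while differentiation of the second term gives $\partial_\zeta(\hat g \circ \Chi) = - \grad \cdot (\sS \sP \hat \bu \circ \Chi)$. By \eqref{e.CoV-comp}, the left-hand side equals $(\bOmega \cdot \grad \hat g) \circ \Chi$; since $\bOmega$ is constant and $\sQ \hat \bu = \bOmega \hat g$, this is $(\grad \cdot \sQ \hat \bu) \circ \Chi$. By \eqref{e.div-commute-2}, the right-hand side equals $-(\grad \cdot \sP \hat \bu) \circ \Chi$. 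Adding the two contributions yields $(\grad \cdot \hat \bu) \circ \Chi = 0$, and since $\Chi$ is a diffeomorphism this gives $\grad \cdot \hat \bu = 0$. I note that this step invokes the commutation identities of Section~\ref{s.mapping-decomp} only in the direction that does not presuppose divergence-freeness of $\hat\bu$ — indeed \eqref{e.div-commute-2} follows from the pure change-of-variables identity \eqref{e.div-commute} — so there is no circularity.

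For mean-freeness, since $\sP \hat \bu$ is mean-free by hypothesis it suffices to show that $\hat g$ is mean-free, i.e., $\int_{-s^{-1}}^0 \hat g \circ \Chi \, d\zeta = 0$. Writing $h(\zeta) = \grad \cdot (\sS \sP \hat \bu \circ \Chi)$ so that \eqref{e.g1} reads $\hat g \circ \Chi = - s \int_{-s^{-1}}^0 \zeta \, h(\zeta) \, d\zeta - \int_{-s^{-1}}^\zeta h(\zeta') \, d\zeta'$, I would integrate over $\zeta \in [-s^{-1},0]$: the first term is constant in $\zeta$ and contributes $- \int_{-s^{-1}}^0 \zeta \, h(\zeta) \, d\zeta$, while integrating the iterated integral by parts produces exactly $+ \int_{-s^{-1}}^0 \zeta \, h(\zeta) \, d\zeta$ — the boundary term vanishes at $\zeta = -s^{-1}$ because the inner integral is empty and at $\zeta = 0$ because of the prefactor $\zeta$. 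The two contributions cancel, which is the same bookkeeping identity already used in \eqref{e.meanFree}. Finally, $\sQ \hat \bu = \bOmega \hat g \in \Range \sQ$ and $\sP \hat \bu \in \Range \sP$ by hypothesis, so the decomposition $\hat \bu = \sP \hat \bu + \sQ \hat \bu$ is consistent with the oblique splitting.

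I do not anticipate a serious obstacle: the statement is a converse to Lemma~\ref{l.constraint} obtained by reversing its computation, so the proof is purely change of variables plus one integration by parts. The only points requiring minor care are checking that the boundary terms genuinely vanish in the integration by parts and confirming that the oblique-coordinate identities are used only in the forms that hold for arbitrary vector fields (or for mean-free $\sP\hat\bu$), so that nothing being proved is assumed.
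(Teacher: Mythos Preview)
Your proposal is correct and follows essentially the same approach as the paper: the divergence-free argument is identical (differentiate \eqref{e.g1} in $\zeta$, invoke \eqref{e.CoV-comp} and \eqref{e.div-commute-2}, and use invertibility of $\Chi$), while for mean-freeness the paper simply refers back to the choice of integration constant in Lemma~\ref{l.constraint}, which is exactly the integration-by-parts cancellation you spell out explicitly.
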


\begin{proof}
The fact that $\hat g$, hence $\hat \bu$, is mean-free is a direct
consequence of the choice of constant of integration in the proof of
Lemma~\ref{l.constraint}.

To prove that $\hat \bu$ is divergence-free, we take the
$\zeta$-derivative of \eqref{e.g1},
\begin{equation}
  \partial_\zeta (\hat g \circ \Chi)
  = - \grad \cdot (\sS \sP \hat \bu \circ \Chi) \,.
\end{equation}
This implies that
\begin{align}
  0 & = \partial_\zeta (\hat g \circ \Chi)
          + \grad \cdot (\sS \sP \hat \bu \circ \Chi)
    \notag \\
    & = (\bOmega \cdot \grad \hat g) \circ \Chi
          + (\grad \cdot \sP \hat \bu) \circ \Chi
    \notag \\
    & = (\grad \cdot \sQ \hat \bu) \circ \Chi
          + (\grad \cdot \sP \hat \bu) \circ \Chi
    \notag \\
    & = (\grad \cdot \hat \bu) \circ \Chi \,.
\end{align}
Since $\Chi$ is invertible, we find that $\grad \cdot \hat \bu=0$.
\end{proof}

\section*{Appendix C. Inner product identities for decomposed vector fields}
\label{a.integration}
\renewcommand{\theequation}{C.\arabic{equation}}
\setcounter{equation}{0}

\begin{lemma}
\label{l.w-Qu}
Let $\bu \in \Vdiv$; we write $\hat \bu$ to denote its mean-free
component as before.  Further, let $\hat \bw$ be any mean free vector
field.  Then
\begin{equation}
  \int_\cD \hat \bw \cdot \sQ \hat \bu \, d \bx
  = \int_\cD \hat \bu \cdot \sP \bC[\hat \bw] \, d\bx
\end{equation}
with $\bC[\hat \bw]$ defined by
\begin{equation}
\label{e.Aw}
  \bC[\hat \bw] \circ \Chi
  = - \sS \grad \int_{-s^{-1}}^\zeta
      \bOmega \cdot \hat \bw \circ \Chi' \, d\zeta' \,.
\end{equation}
\end{lemma}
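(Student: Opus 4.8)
The plan is to change to the oblique coordinate system, integrate by parts twice along the axis of rotation, and recognise the result as the claimed pairing. The crucial structural point is that every boundary term that arises vanishes because $\hat\bw$ is mean-free together with the normalisation built into the antiderivative in \eqref{e.Aw}, and \emph{not} because of any boundary condition on $\bu$; this is what allows $\hat\bw$ to be an arbitrary mean-free field while only $\bu\in\Vdiv$ is used.

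First I would note that $\sQ=\bOmega\bOmega^\sT$, so the left-hand integrand is $(\bOmega\cdot\hat\bw)(\bOmega\cdot\hat\bu)$. Write $W\circ\Chi\equiv\int_{-s^{-1}}^\zeta\bOmega\cdot\hat\bw\circ\Chi'\,d\zeta'$ for the antiderivative appearing in \eqref{e.Aw}; by the fundamental theorem of calculus $\partial_\zeta(W\circ\Chi)=(\bOmega\cdot\hat\bw)\circ\Chi$, while $W\circ\Chi=0$ at $\zeta=-s^{-1}$ by construction and at $\zeta=0$ because $\overline{\bOmega\cdot\hat\bw}=\bOmega\cdot\overline{\hat\bw}=0$. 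Changing variables via $\bx=\Chi(\bxi)$ with $d\bx=s\,d\bxi$ (using $\det\sA=s$) and integrating by parts in $\zeta$ removes the derivative from $W$ at no boundary cost, leaving $-s\int W\circ\Chi\;\partial_\zeta\big((\bOmega\cdot\hat\bu)\circ\Chi\big)\,d\bxi$.

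Next I would bring in $\bu\in\Vdiv$. The corollary in Appendix~\ref{a.divergence} gives $\grad\cdot\hat\bu=0$, whence $\bOmega\cdot\grad(\bOmega\cdot\hat\bu)=\grad\cdot\sQ\hat\bu=-\grad\cdot\sP\hat\bu$; combining \eqref{e.CoV-comp} with \eqref{e.div-commute-2} then yields $\partial_\zeta\big((\bOmega\cdot\hat\bu)\circ\Chi\big)=-\grad\cdot(\sS\sP\hat\bu\circ\Chi)$. Substituting this and integrating by parts a second time---now over the whole oblique domain, with the horizontal boundary terms annihilated by periodicity and the $\zeta$-boundary term again by $W\circ\Chi=0$ at both ends---produces $-s\int\grad(W\circ\Chi)\cdot\sS\sP\hat\bu\circ\Chi\,d\bxi$. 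Using that $\sS$ and $\sP$ are symmetric and that $\sS\,\grad(W\circ\Chi)=-\bC[\hat\bw]\circ\Chi$ by \eqref{e.Aw}, this equals $s\int\sP\bC[\hat\bw]\circ\Chi\cdot\hat\bu\circ\Chi\,d\bxi=\int_\cD\hat\bu\cdot\sP\bC[\hat\bw]\,d\bx$, which is the assertion.

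I expect the only genuine difficulty to be bookkeeping: keeping the two integrations by parts straight, correctly moving the symmetric matrices $\sS$ and $\sP$ across the inner product, and---most importantly---discarding the $\zeta$-boundary terms for the right reason, namely the vanishing of $W\circ\Chi$ at $\zeta=0$ and $\zeta=-s^{-1}$, rather than appealing to any property of $\hat\bu$ at the top and bottom. As an alternative route one could substitute the explicit formula \eqref{e.g} for $\hat g=\bOmega\cdot\hat\bu$ from Lemma~\ref{l.constraint} directly, drop its $\zeta$-independent term using $\overline{\bOmega\cdot\hat\bw}=0$, swap the order of the two $\zeta$-integrations by Fubini, and arrive at the same point with only a single integration by parts.
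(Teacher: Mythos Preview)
Your proof is correct and essentially matches the paper's argument. The paper's own proof is precisely your ``alternative route'': it invokes Lemma~\ref{l.constraint} to write $\sQ\hat\bu=\bOmega\hat g$ with $\hat g$ already given as a constant-in-$\zeta$ term plus the antiderivative of $-\grad\cdot(\sS\sP\hat\bu\circ\Chi)$, drops the constant term by pairing with the mean-free $\bOmega\cdot\hat\bw$, swaps the two $\zeta$-antiderivatives by one integration by parts (boundary term killed by the mean-free condition on $\hat\bw$), and then applies the divergence theorem; your primary route simply reorders these steps by deriving the derivative identity $\partial_\zeta((\bOmega\cdot\hat\bu)\circ\Chi)=-\grad\cdot(\sS\sP\hat\bu\circ\Chi)$ inline rather than quoting the integrated form from Lemma~\ref{l.constraint}.
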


\begin{proof}
By Lemma~\ref{l.constraint},
\begin{equation}
  \int_\cD \hat \bw \cdot \sQ \hat \bu \, d \bx
  = \int_\cD \hat \bw \cdot \bOmega \,
    \hat g \, d \bx
  = s \int_{\bbT^2} \int_{-s^{-1}}^0 \bOmega \cdot \hat \bw \circ \Chi \,
    \hat g \circ \Chi \, d \bxi \,,
  \label{e.w-Qu-proof-1}
\end{equation}
where $\hat g \circ \Chi$ is given by \eqref{e.g}.  As it is
integrated against a mean-free vector field, the first term on the
right of \eqref{e.g} does not contribute to the integral
\eqref{e.w-Qu-proof-1}, so that
\begin{align}
  \int_\cD \hat \bw \cdot \sQ \hat \bu \, d \bx
  & = - s \int_{\bbT^2} \int_{-s^{-1}}^0 \bOmega \cdot \hat \bw \circ \Chi
      \int_{-s^{-1}}^\zeta \grad \cdot
        (\sS \sP \hat \bu \circ \Chi') \, d\zeta' \, d \bxi
      \notag \\
  & = s \int_{\bbT^2} \int_{-s^{-1}}^0
      \grad \cdot (\sS \sP \hat \bu \circ \Chi) \int_{-s^{-1}}^\zeta
      \bOmega \cdot \hat \bw \circ \Chi' \, d\zeta' \, d \bxi
    \notag \\
    & = - s \int_{\bbT^2} \int_{-s^{-1}}^0
      \hat \bu \circ \Chi \cdot \sP \sS \grad \int_{-s^{-1}}^\zeta
      \bOmega \cdot \hat \bw \circ \Chi' \, d\zeta' \, d \bxi
    \notag \\
    & = \int_\cD \hat \bu \cdot \sP \bC [\hat \bw] \, d \bx
   \label{e.wQuhat}
\end{align}
where $\bC[\hat \bw]$ is given by \eqref{e.Aw}.  We remark that the
second inequality is based on integration by parts in $\zeta$, the third
equality is due to the divergence theorem.  In both cases, the
boundary terms vanish due to the mean-free condition on $\hat \bw$.
We have further used the symmetry of the matrices $\sS$ and $\sP$.
\end{proof}

\begin{lemma}
\label{l.phi-bar-u3}
Let $\bu = \hat \bu + \bar \bu \in \Vdiv$ and let $\bar \phi$ be the
vertical mean of an arbitrary scalar field.  Then
\begin{equation}
  \int_\cD \bar \phi \, \bar u_3 \, d\bx
  = - \int_\cD \hat \bu \cdot \sP \grad \bar \phi \, z \, d \bx \,.
  \label{e.phi-bar-u3}
\end{equation}
\end{lemma}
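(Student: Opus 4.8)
The plan is to absorb the weight $z$ into a gradient, so that an integration by parts against the divergence-free field $\hat\bu$ leaves only a boundary term, which I then evaluate explicitly. To begin, I would observe that $\sP\grad\bar\phi = \grad\bar\phi$: by \eqref{e.CoV-comp} the $\zeta$-independence of $\bar\phi\circ\Chi$ is equivalent to $\bOmega\cdot\grad\bar\phi = 0$, so that $\sQ\grad\bar\phi = \bOmega\,(\bOmega\cdot\grad\bar\phi) = 0$. It therefore suffices to prove $\int_\cD\bar\phi\,\bar u_3\,d\bx = -\int_\cD\hat\bu\cdot\grad\bar\phi\,z\,d\bx$.

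Next, using $z\,\grad\bar\phi = \grad(z\bar\phi) - \bar\phi\,\bk$ together with $\hat u_3 = \hat\bu\cdot\bk$, I would write
\[
  \int_\cD \hat\bu\cdot\grad\bar\phi\,z\,d\bx
  = \int_\cD \hat\bu\cdot\grad(z\bar\phi)\,d\bx
    - \int_\cD \bar\phi\,\hat u_3\,d\bx \,.
\]
The second integral vanishes by \eqref{e.barhat}, since $\hat u_3$ is mean-free while $\bar\phi$ is a mean. For the first integral, $\hat\bu$ is divergence-free by the corollary to Lemma~\ref{l.mean-divergence2}, so the divergence theorem reduces it to an integral over $\partial\cD$; the horizontal faces cancel by periodicity, the top face $z=0$ contributes nothing owing to the explicit factor $z$, and on the bottom face $z=-1$, where the outward normal is $-\bk$, there remains $\int_{\bbT^2}\bar\phi\,\hat u_3\big|_{z=-1}\,dx\,dy$.

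The step requiring care — and the most likely place to slip — is that $\hat u_3$ does \emph{not} vanish on the boundary; only $u_3$ does. From $u_3 = 0$ at $z=-1$ one gets $\hat u_3 = u_3 - \bar u_3 = -\bar u_3$ there. Since $\bar\phi\circ\Chi$ and $\bar u_3\circ\Chi$ are independent of $\zeta$, I would pull this bottom integral back through $\Chi$ — a horizontal translation on $\bbT^2$, hence measure-preserving — and compare it with the same pull-back of $\int_\cD\bar\phi\,\bar u_3\,d\bx$, in which $d\bx = s\,d\bxi$ while the trivial $\zeta$-integration supplies a compensating factor $s^{-1}$. The two expressions agree up to the overall sign, so $\int_{\bbT^2}\bar\phi\,\hat u_3\big|_{z=-1}\,dx\,dy = -\int_\cD\bar\phi\,\bar u_3\,d\bx$; substituting this back into the two-term split and recalling $\sP\grad\bar\phi=\grad\bar\phi$ yields the claim.
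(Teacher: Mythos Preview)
Your argument is correct and takes a genuinely different route from the paper's proof. The paper works in oblique coordinates throughout: it invokes Lemma~\ref{l.constraint} to write $\sQ\hat\bu=\bOmega\hat g$ with the explicit integral formula for $\hat g\circ\Chi(-s^{-1})$, then expresses $\bar u_3$ at the bottom via the no-flux condition $\bk\cdot\bar\bu=-\bk\cdot\hat\bu$, and finally performs an integration by parts in $\bxi$-coordinates in which a boundary term cancels against the $\bk\cdot\sP\hat\bu\circ\Chi(-s^{-1})$ contribution, ultimately identifying $\sP\sS\sA^\sT=\sP$ to recover the Cartesian form. Your approach sidesteps all of this machinery by first observing $\sP\grad\bar\phi=\grad\bar\phi$, then absorbing the weight $z$ via the product rule $z\,\grad\bar\phi=\grad(z\bar\phi)-\bar\phi\bk$ and applying the divergence theorem directly in Cartesian coordinates; the only oblique-coordinate step is the harmless horizontal translation used to identify the bottom boundary integral with the volume integral of $\bar\phi\,\bar u_3$. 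Your argument is shorter and does not rely on the explicit reconstruction of $\sQ\hat\bu$ from Lemma~\ref{l.constraint}; the paper's proof, on the other hand, stays within the oblique-coordinate framework used consistently across Appendix~B and~C, which makes the computations there more uniform even if this particular lemma becomes longer.
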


\begin{proof}
By Lemma~\ref{l.constraint}, $\sQ \hat \bu = \bOmega \hat g$ with
\begin{equation}
  \hat g \circ \Chi (-s^{-1})
  = - s \int_{-s^{-1}}^0 \zeta \, \grad \cdot 
          (\sS \sP \hat \bu \circ \Chi) \, d \zeta \,.
\end{equation}
At the bottom boundary, $\bk \cdot \bar \bu = - \bk \cdot \hat \bu = -
\bk \cdot \sP \hat \bu - \bk \cdot \sQ \hat \bu$,
so that
\begin{align}
  \int_\cD \bar \phi \, \bar u_3 \, d\bx
  & = s \int_{\bbT^2}
    \int_{-s^{-1}}^0 \bar \phi \circ \Chi \,
      \bigl[
        - \bk \cdot \sP \hat \bu \circ \Chi (-s^{-1})
        - s \, \hat g \circ \Chi (-s^{-1})
      \bigr] \, d\bxi
    \notag \\
  & = \int_{\bbT^2} \bar \phi \circ \Chi \,
      \biggl[
        - \bk \cdot \sP \hat \bu \circ \Chi (-s^{-1}) 
        + s^2 \int_{-s^{-1}}^0 \zeta \,
          \grad \cdot (\sS \sP \hat \bu \circ \Chi) \, d \zeta
      \biggr] \, d x
    \notag \\
  & = - \int_{\bbT^2} \bar \phi \circ \Chi \, \bk \cdot \sP \hat \bu
    \circ \Chi (-s^{-1}) \, d x
    \notag \\
  & \quad + s^2 \int_{\bbT^2} \int_{-s^{-1}}^0 \bar \phi \circ \Chi \, 
       \bigl( \grad \cdot (\zeta \, \sS \sP \hat \bu \circ \Chi)
         - \grad \zeta \cdot \sS \sP \hat \bu \circ \Chi \bigr) \, d \bxi \,.
\end{align}
Since $\grad \zeta = \bk$, the second term in the last integral
vanishes as the vertical integration is over a mean free quantity.
For the first term in the last integral, we integrate by parts.  The
boundary term from the upper boundary is zero.  The boundary term from
the lower boundary exactly cancels the integral on the second last
line, so that
\begin{align}
  \int_\cD \bar \phi \, \bar u_3 \, d\bx
  & = - s^2 \int_{\bbT^2} \int_{-s^{-1}}^0 \hat \bu \circ \Chi \cdot \sP \sS
      \grad (\bar \phi \circ \Chi) \, \zeta \, d \bxi
    \notag \\
  & = - s \int_{\bbT^2} \int_{-s^{-1}}^0 \hat \bu \circ \Chi \cdot \sP \sS
      (\sA^\sT \grad \bar \phi \, z) \circ \Chi \, d \bxi \,,
\end{align}
where the last equality is due to
$\grad (f \circ \Chi) = (\sA^\sT \grad f) \circ \Chi$ and $z=s \zeta$.
Noting that $\sP \sS \sA^\sT = \sP$ and changing back to Cartesian
coordinates, we obtain \eqref{e.phi-bar-u3}.
\end{proof}

\begin{lemma}
\label{l.div-u-bar}
Under the conditions of Lemma~\ref{l.div-uhat}, there exists $\bar
\bu$, also divergence free, such that $\bu = \hat \bu + \bar \bu$
satisfies the zero-flux boundary condition $\bk \cdot \bu = 0$ at
$z=0,-1$. 
\end{lemma}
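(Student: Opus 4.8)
The plan is to let the zero-flux boundary conditions prescribe the vertical component of $\bar\bu$ and then to choose its horizontal components so that $\bar\bu$ is divergence free.

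Since $\bar\bu$ is a mean field, $\bk\cdot\bar\bu\circ\Chi$ is independent of $\zeta$, so the conditions $\bk\cdot\bu = 0$ at $z = 0$ and $z = -1$ --- that is, at $\zeta = 0$ and $\zeta = -s^{-1}$ --- force
\begin{equation}
  \bk\cdot\bar\bu\circ\Chi
  = - \bk\cdot\hat\bu\circ\Chi(-s^{-1})
  = - \bk\cdot\hat\bu\circ\Chi(0) \,.
  \label{e.pf-bc}
\end{equation}
The substance of the lemma is the consistency of these two prescriptions, i.e.\ that $\bk\cdot\hat\bu$ takes the same value on the top and on the bottom boundary; once this is known one simply \emph{defines} $\bk\cdot\bar\bu\circ\Chi$ by \eqref{e.pf-bc}, which is a legitimate specification of the vertical component of a mean field because the right-hand side is a function of the horizontal variables $(x,y)\in\bbT^2$ alone.

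This consistency check is the only genuine step, and it uses both that $\hat\bu$ is divergence free and that it is mean free componentwise, both granted by Lemma~\ref{l.div-uhat}. I would write incompressibility of $\hat\bu$ in oblique coordinates via \eqref{e.div-commute}, $0 = (\grad\cdot\hat\bu)\circ\Chi = \grad\cdot(\sA^{-1}\hat\bu\circ\Chi)$; with $\sA^{-1}\hat\bu = (\hat u_1,\, \hat u_2 - \tfrac cs\hat u_3,\, \tfrac1s\hat u_3)$ and $\hat u_3 = \bk\cdot\hat\bu$ this reads
\begin{equation}
  \partial_\zeta\bigl(\tfrac1s\,\bk\cdot\hat\bu\circ\Chi\bigr)
  = - \partial_x(\hat u_1\circ\Chi)
    - \partial_y\bigl(\hat u_2\circ\Chi - \tfrac cs\,\bk\cdot\hat\bu\circ\Chi\bigr) \,.
\end{equation}
Integrating over $\zeta\in[-s^{-1},0]$ and commuting the horizontal derivatives past the $\zeta$-integral, each term on the right integrates to a horizontal derivative of the vertical mean of a mean-free quantity, hence vanishes by \eqref{e.u_mean}; the left-hand side integrates to $s^{-1}\bigl(\bk\cdot\hat\bu\circ\Chi(0) - \bk\cdot\hat\bu\circ\Chi(-s^{-1})\bigr)$. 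Thus the two boundary values agree and \eqref{e.pf-bc} is well posed.

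It remains to choose the horizontal part $\bar u_h$ of $\bar\bu$ so that $\grad\cdot\bar\bu = 0$. Arguing exactly as in the proof of Lemma~\ref{l.mean-divergence2} and using that $\bk\cdot\bar\bu\circ\Chi$ is $\zeta$-independent, it suffices to arrange $\nabla\cdot\bar u_h = \tfrac cs\,\partial_y(\bk\cdot\bar\bu)$ on $\bbT^2$. The right-hand side is a pure $y$-derivative, hence has zero mean over $\bbT^2$, so the Poisson equation $\Delta\phi = \tfrac cs\,\partial_y(\bk\cdot\bar\bu)$ has a solution and we may take $\bar u_h = \nabla\phi$. For the resulting mean field $\bar\bu$, the sum $\bu = \hat\bu + \bar\bu$ is then divergence free and satisfies $\bk\cdot\bu = 0$ at $z = 0, -1$, which is the claim. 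The main obstacle is the consistency step above; the rest is bookkeeping, and the freedom of adding to $\bar u_h$ any divergence-free two-dimensional vector field is immaterial here.
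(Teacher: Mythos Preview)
Your proof is correct and follows the same overall skeleton as the paper: verify that $\bk\cdot\hat\bu$ coincides on the two boundaries, use this to define $\bar u_3$, then solve a Poisson problem on $\bbT^2$ for the horizontal part of $\bar\bu$.

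The only substantive difference is in the consistency step. The paper establishes $\hat u_3\circ\Chi(0)=\hat u_3\circ\Chi(-s^{-1})$ by invoking the explicit representation $\sQ\hat\bu=\bOmega\hat g$ from Lemma~\ref{l.constraint}/\ref{l.div-uhat}, splitting $\hat u_3=\bk\cdot\sP\hat\bu+s\hat g$, and tracing the integral formula for $\hat g$ from $\zeta=-s^{-1}$ to $\zeta=0$. You bypass this machinery entirely: you write the divergence-free condition for $\hat\bu$ directly in oblique coordinates via \eqref{e.div-commute}, integrate in $\zeta$, and use only that each component of $\hat\bu$ is mean free. This is shorter and more elementary; the paper's route has the advantage of making explicit how the boundary values are expressed through $\sP\hat\bv$ and $\hat g$, which is what is actually used downstream in \eqref{e.vn3-bar} and Appendix~D. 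Both arguments are valid, and the remaining steps (invoking Lemma~\ref{l.mean-divergence2} and solving $\Delta\phi=\tfrac cs\,\partial_y\bar u_3$) are identical.
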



\begin{proof}
By Lemma~\ref{l.div-uhat}, $\hat \bu$ is divergence free. We now
choose $\bar u_3$ such that the vector field $\bu$ is tangent to the
top and bottom boundaries.  At $z=-1,0$, we require
\begin{align}
    \bk \cdot \bar \bu = - \bk \cdot \hat \bu \,.
    \label{e.u-boundary-condition}
\end{align}
Observe that
\begin{align}
\label{e.u3-bar-top}
  \hat u_3 \circ \Chi(0)
  & = \bk \cdot \sP \hat \bu \circ \Chi (0)
      + s \, \hat g \circ \Chi (0)
    \notag \\
    & = \bk \cdot \sP \hat \bu \circ \Chi (0)
      + s \, \hat g \circ \Chi (-s^{-1})
      - s \int_{-s^{-1}}^0 \grad \cdot
          (\sS \sP \hat \bu \circ \Chi) \, d\zeta
    \notag \\
    & = \bk \cdot \sP \hat \bu \circ \Chi (0)
      + s \, \hat g \circ \Chi (-s^{-1})
      - \bk \cdot \sP \hat \bu \circ \Chi(0)
      + \bk \cdot \sP \hat \bu \circ \Chi(-s^{-1})
    \notag \\
    & = \bk \cdot \sP \hat \bu \circ \Chi(-s^{-1})
      + s \, \hat g \circ \Chi (-s^{-1}) \,.
\end{align}
This implies $\hat u_3$ takes the same value at
the bottom and top boundaries along any line in the direction of the
axis of rotation.  Consequently, we can use
\eqref{e.u-boundary-condition}  to \emph{define} $\bk \cdot \bar \bu$
at the bottom, i.e., we set 
\begin{equation}
  \label{e.u3-bar-bottom}
  \bar u_3 \circ \Chi
  = - \bk \cdot \sP \hat \bu \circ \Chi (-s^{-1})
    - s \, \hat g \circ \Chi (-s^{-1}) \,.
\end{equation}
Then the boundary condition \eqref{e.u-boundary-condition} is
satisfied at $z=0$ as well.

We now choose $\bar u$ such that $\bu$ is divergence-free.  Indeed,
due to Lemma~\ref{l.mean-divergence2}, it suffices to ensure that
$\nabla \cdot \sS_h \sP \bar \bu = 0$, cf.\
\eqref{e.assumption-coords} for an explicit expression.  Setting
$\bar u = \nabla \phi$, we see that this implies
\begin{equation}
  \Delta \phi = \frac{c}s \, \partial_y \bar u_3 \,,
\end{equation}
which can be solved as a Poisson equation on $\bbT^2$.
\end{proof}

\begin{lemma}
\label{l.uJv}
Let $\bu, \bv \in \Vdiv$ be such that their domain-mean is zero.  Then
\begin{equation}
  \int_\cD \bu \cdot \sJ \bv \, d\bx
  = \int_\cD \hat \bu \cdot \sJ \hat \bv \, d\bx \,.
  \label{e.bar-v-vanishes}
\end{equation}
\end{lemma}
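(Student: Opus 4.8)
The plan is to decompose $\bu = \hat \bu + \bar \bu$ and $\bv = \hat \bv + \bar \bv$ into oblique-mean and mean-free parts, expand the bilinear form $\int_\cD \bu \cdot \sJ \bv \, d\bx$ into four contributions, and show that the three contributions involving at least one mean component vanish, so that only $\int_\cD \hat \bu \cdot \sJ \hat \bv \, d\bx$ survives.

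The key observation is that $\sJ$ is a constant matrix (indeed $\sJ \bw = \bOmega \times \bw$), so it commutes with oblique averaging: $\overline{\sJ \bw} = \sJ \bar \bw$ for any vector field $\bw$. Hence $\sJ$ maps mean fields to mean fields and mean-free fields to mean-free fields. The two mixed terms $\int_\cD \hat \bu \cdot \sJ \bar \bv \, d\bx$ and $\int_\cD \bar \bu \cdot \sJ \hat \bv \, d\bx$ therefore pair, component by component, a mean-free scalar against a mean scalar, and both vanish by the $L^2$-orthogonality \eqref{e.barhat}.

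It then remains to treat the purely mean term $\int_\cD \bar \bu \cdot \sJ \bar \bv \, d\bx$. Here I would pass to oblique coordinates: since $\bar \bu \circ \Chi$, $\bar \bv \circ \Chi$ and hence the integrand are independent of $\zeta$ and $\det \sA = s$, the $\zeta$-integration produces a factor $s^{-1}$ that cancels the Jacobian, reducing the integral to $\int_{\bbT^2} (\bar \bu \cdot \sJ \bar \bv) \circ \Chi \, dx$. Since $\sJ = \sJ \sP$ while $\sQ \bar \bu \in \Range \sQ = \Ker \sJ$ is orthogonal to $\sJ \sP \bar \bv \in \Range \sP$, only the $\sP$-components contribute, and a short computation with the explicit matrices $\sJ$, $\sP$, $\sS$ shows that the terms containing $\bar u_3$ and $\bar v_3$ cancel, leaving $\bar \bu \cdot \sJ \bar \bv = s \, (p_2 q_1 - p_1 q_2)$ with $(p_1, p_2) = \sS_h \sP \bar \bu = (\bar u_1, \bar u_2 - \tfrac{c}{s} \bar u_3)$ and $(q_1, q_2)$ the analogous combination built from $\bar \bv$. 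By Lemma~\ref{l.mean-divergence1} the fields $(p_1, p_2)$ and $(q_1, q_2)$ are horizontally divergence-free on $\bbT^2$, and the hypothesis that $\bu$ and $\bv$ have vanishing domain mean forces them to have vanishing mean over $\bbT^2$ as well; hence each is the perpendicular gradient of a single-valued periodic stream function, $(p_1, p_2) = \nabla^\bot \psi$ and $(q_1, q_2) = \nabla^\bot \chi$. Consequently $p_2 q_1 - p_1 q_2 = \nabla \psi \cdot \nabla^\bot \chi = \nabla \cdot (\psi \, \nabla^\bot \chi)$ is a horizontal divergence and integrates to zero over $\bbT^2$, which finishes the proof.

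The only step demanding real care is this last one: one has to single out the correct two-dimensional divergence-free combination of the Cartesian components of $\bar \bu$ — precisely $\sS_h \sP \bar \bu$, for which Lemma~\ref{l.mean-divergence1} supplies the needed identity — and verify that the $\bar u_3$, $\bar v_3$ contributions genuinely drop out so that no residual gauge or boundary terms remain. The zero-domain-mean assumption enters exactly to guarantee that the stream functions are globally defined on the torus; a surviving constant (solid-body translation) component would otherwise obstruct the argument, which shows the hypothesis cannot be dropped.
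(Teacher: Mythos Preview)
Your proof is correct and follows essentially the same approach as the paper: the cross terms vanish by the $L^2$-orthogonality \eqref{e.barhat}, and the $\bar\bu$--$\bar\bv$ term is handled by recognising, via Lemma~\ref{l.mean-divergence1} and the zero-domain-mean hypothesis, that $\sS_h\sP\bar\bu$ and $\sS_h\sP\bar\bv$ admit periodic stream functions, after which the pairing reduces to the orthogonality of gradients and curls. The only cosmetic difference is that the paper packages your coordinate computation $\bar\bu\cdot\sJ\bar\bv = s(p_2q_1-p_1q_2)$ as the matrix identity $\sS_h^{-\sT}\sJ\,\sS_h^{-1}=s\,\sJ_2$ (using the pseudo-inverse $\sS_h^{-1}$ on $\Range\sP$), and it keeps the integral over $\cD$ rather than reducing to $\bbT^2$; your route is equally valid and arguably more transparent.
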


\begin{proof}
The mean of $\bu$ over the domain $\cD$ is zero if and only if the
horizontal mean of $\bar \bu$ is zero, and likewise for $\bv$.
Moreover, by Lemma~\ref{l.mean-divergence1},
$\nabla \cdot (\sS_h \sP \bar \bu)=0$ and
$\nabla \cdot (\sS_h \sP \bar \bv)=0$.  Thus, there exist scalar
fields $\psi$ and $\theta$ such that
$\sS_h \sP \bar \bu = \nabla^\bot \psi$ and
$\sS_h \sP \bar \bv = \nabla^\bot \theta$. We write
\begin{equation}
  \sS^{-1}_h
  = \begin{pmatrix}
      1 & 0 \\ 0 & s^2 \\ 0 & - cs
    \end{pmatrix}
  \label{e.pseudoinverse}
\end{equation}
to denote the pseudo-inverse of $\sS_h$ on $\Range \sP$. It satisfies
\begin{subequations}
\begin{gather}
  \sS_h \, \sS_h^{-1} = \sI_{2} \,,
\intertext{where $I_2$ denotes the $2 \times 2$ identity matrix, and}
  \sS_h^{-1} \, \sS_h \, \sP = \sP \,.
\end{gather}
\end{subequations}
Then, $\sP \bar \bu = \sS_h^{-1} \, \nabla^\bot \psi$ and
$\sP \bar \bv = \sS_h^{-1} \, \nabla^\bot \theta$. Further, observing that
\begin{gather}
  \sS_h^{-\sT} \, \sJ \, \sS_h^{-1} = s \, \sJ_{2} \,,
\end{gather}
where $\sJ_{2}$ denotes the canonical $2 \times 2$ symplectic matrix, and
recalling that $\sP \sJ = \sJ = \sJ \sP$, we compute
\begin{align}
    \int_\cD \bar \bu \cdot \sJ \bar \bv \, d\bx
    & = \int_\cD \sP \bar \bu
          \cdot \sJ \sP \bar \bv \, d\bx
    \notag \\
    & = \int_\cD \sS_h^{-1} \nabla^\bot \psi
          \cdot \sJ \sS_h^{-1} \nabla^\bot \theta \, d\bx
    \notag \\
    & = \int_\cD \nabla^\bot \psi
          \cdot \sS_h^{-\sT} \sJ \sS_h^{-1} \nabla^\bot \theta \, d\bx
    \notag \\
    & = - s \int_\cD \nabla^\bot \psi
          \cdot \nabla \theta \, d\bx \,.
    \label{e.skew-inner-product}
\end{align}
By orthogonality of gradients and curls, the last integral is zero,
which implies \eqref{e.bar-v-vanishes}.
\end{proof}

\section*{Appendix D. Derivation of potential energy contribution to $L_1$}
\label{a.pe-derivation}
\renewcommand{\theequation}{D.\arabic{equation}}
\setcounter{equation}{0}

In the following, we give a detailed derivation of the potential
energy contribution to the $L_1$-Lagrangian.  Inserting the boundary
condition for the transformation vector field \eqref{e.vn3-bar} and
the representation of $\sQ \bar \bv$ via \eqref{e.g2}, we compute:
\begin{align}
\label{e.l1-PE2}
  - \int_\cD \rho \, \bv \cdot \bk \, d\bx
  & = - s \int_{\bbT^2} \int_{-s^{-1}}^0 \rho \circ \Chi \,
      \bigl(
        \bk \cdot \bar \bv \circ \Chi
        + \bk \cdot \sP \hat \bv \circ \Chi
        + \bk \cdot \sQ \hat \bv \circ \Chi
      \bigr) \, d\bxi
    \notag \\
  & = s \int_{\bbT^2} \int_{-s^{-1}}^0 \rho \circ \Chi \,
        \biggl[
          \bk \cdot \sP \hat \bv \circ \Chi (-s^{-1})
          + s \, \hat g \circ \Chi (-s^{-1})
          - \bk \cdot \sP \hat \bv \circ \Chi
        \notag \\
        & \quad
          - s \, \hat g \circ \Chi (-s^{-1})
          + s \int_{-s^{-1}}^\zeta \grad \cdot
            (\sS \sP \hat \bv \circ \Chi')\, d \zeta'
        \biggr] \, d\bxi
    \notag \\
  & = s \int_{\bbT^2} \int_{-s^{-1}}^0 \rho \circ \Chi \,
        \biggl[
          \bk \cdot \sP \hat \bv \circ \Chi (-s^{-1})
          - \bk \cdot \sP \hat \bv \circ \Chi  
    \notag \\
    & \quad
          + \int_{-s^{-1}}^\zeta
            \partial_\zeta (\bk \cdot \sP \hat \bv \circ \Chi') \, d\zeta'
          + s \int_{-s^{-1}}^\zeta
            \nabla \cdot (\sS_h \sP \hat \bv \circ \Chi') \, d\zeta'
        \biggr] \, d\bxi
    \notag \\
    & = - s^2 \int_{\bbT^2} \int_{-s^{-1}}^0
          \nabla \rho \circ \Chi \cdot \int_{-s^{-1}}^\zeta
          \sS_h \sP \hat \bv \circ \Chi' \, d \zeta' \, d\bxi
\end{align}
Further, inserting \eqref{e.Pv-hat}, using the identity
\begin{gather}
\label{e.identity-SJ}
  \sS_h \, \sJ^\sT = - s^{-1} \, \sJ_2 \, \sP_h \,,
\end{gather}
and noting that horizontal gradients and composition with $\Chi$
commute, we obtain
\begin{align}
  - \int_\cD \rho \, \bv \cdot \bk \, d\bx 
  & = - s^2 \int_{\bbT^2} \int_{-s^{-1}}^0
        \nabla \rho \circ \Chi \cdot \int_{-s^{-1}}^\zeta
        \sS_h \sJ^\sT \, \bigl( - \tfrac{1}{2} \, \hat \bV +
        \lambda \, \hat \bV_g \bigr) \circ \Chi' \, d\zeta' \, d\bxi
    \notag \\
  & = s \int_{\bbT^2} \int_{-s^{-1}}^0
        \nabla \rho \circ \Chi \cdot \int_{-s^{-1}}^\zeta
        \sJ_2 \, \sP_h \, \bigl( - \tfrac{1}{2} \, \hat \bV +
        \lambda \, \hat \bV_g \bigr) \circ \Chi' \, d\zeta' \, d\bxi
    \notag \\
  & = s \int_{\bbT^2} \int_{-s^{-1}}^0
        \nabla^\bot \rho \circ \Chi \cdot \int_{-s^{-1}}^\zeta
        \sP_h \, \bigl( \tfrac{1}{2} \, \hat \bV -
        \lambda \, \hat \bV_g \bigr) \circ \Chi' \, d\zeta' \, d\bxi \,.
\end{align}
Inserting the thermal wind relation \eqref{e.twr-mean}, integrating by
parts with respect to $\zeta$, and changing variables, we continue the
computation:
\begin{align}
  - \int_\cD \rho \, \bv \cdot \bk \, d\bx
  & = - s \int_{\bbT^2} \int_{-s^{-1}}^0
        \partial_\zeta (\hat u_g \circ \Chi) \cdot
        \int_{-s^{-1}}^\zeta \sP_h \, \bigl( \tfrac{1}{2} \, \hat \bV -
        \lambda \, \hat \bV_g \bigr) \circ \Chi' \, d\zeta' \, d\bxi
    \notag \\
  & = \int_\cD
        \hat u_g \cdot \sP_h \bigl( \tfrac{1}{2} \, \hat \bV -
        \lambda \, \hat \bV_g \bigr) \, d\bx 
    \notag \\ 
  & = \int_\cD \hat u_g \cdot 
        \bigl( \tfrac{1}{2} \, \hat u - \lambda \, \hat u_g \bigr) \, d\bx
\label{e.PE-2}
\end{align}
where, in the last step, we have made use of Lemma~\ref{l.w-Qu},
Lemma~\ref{l.phi-bar-u3}, and the fact that $\bk \cdot \bu_g = 0$.

\bibliographystyle{apacite}
\renewenvironment{APACrefURL}[1][]{}{}
\def\url#1{}
\bibliography{trr}

\end{document}